\documentclass{amsart}
\usepackage[utf8]{inputenc}
\usepackage{amsmath}
\usepackage{amssymb}
\usepackage{caption}
\usepackage{amsthm}
\usepackage{mathtools}
\usepackage{enumerate}
\usepackage{graphicx,tikz}
\usepackage[colorlinks=true, linkcolor=blue, citecolor=blue]{hyperref}
\usepackage{enumitem}
\usepackage{comment}

\usepackage[all,2cell,ps]{xy}

\newtheorem{theorem}{Theorem}[section]
\newtheorem{corollary}[theorem]{Corollary}
\newtheorem{lemma}[theorem]{Lemma}
\newtheorem{proposition}[theorem]{Proposition}

\theoremstyle{definition}
\newtheorem{definition}[theorem]{Definition}
\newtheorem{remark}[theorem]{Remark}

\newtheoremstyle{TheoremNum}
{.5em}{.5em}              
{\itshape}                      
{}                              
{\bfseries}                     
{.}                             
{ }                             
{\thmname{#1}\thmnote{ \bfseries #3}}
\theoremstyle{TheoremNum}
\newtheorem{thmn}{Theorem}

\newcommand{\bbC}{\mathbb{C}}

\newcommand{\bbR}{\mathbb{R}}

\newcommand{\bbZ}{\mathbb{Z}}

\newcommand{\calH}{\mathcal{H}}

\newcommand{\calM}{\mathcal{M}}

\newcommand{\calO}{\mathcal{O}}

\DeclareMathOperator{\Orth}{O}

\DeclareMathOperator{\PSL}{PSL}

\DeclareMathOperator{\PO}{PO}
\DeclareMathOperator{\SO}{SO}
\DeclareMathOperator{\PSO}{PSO}

\DeclareMathOperator{\POm}{P\Omega}

\def\F{{\mathbb F}}

\DeclareMathOperator{\Isom}{Isom}
\DeclareMathOperator{\Aut}{Aut}

\DeclareMathOperator{\Out}{Out}

\DeclareMathOperator{\ord}{ord}

\makeatletter
\let\@@pmod\pmod
\DeclareRobustCommand{\pmod}{\@ifstar\@pmods\@@pmod}
\def\@pmods#1{\mkern4mu({\operator@font mod}\mkern 6mu#1)}
\makeatother

\usetikzlibrary{arrows}

\renewcommand{\qed}{\hfill$\scriptstyle\blacksquare$}

\title[Normalizers and isometry groups of arithmetic manifolds]{Normalizers of lattices and isometry groups of arithmetic hyperbolic manifolds}

\dedicatory{Dedicated to Alex Lubotzky on his 70th birthday}

\author[M. Belolipetsky]{Mikhail Belolipetsky}
\address{IMPA, Estrada Dona Castorina 110, 22460-320 Rio de Janeiro, Brazil}
\email[]{mbel@impa.br}

\author[T. Cheetham-West]{Tam Cheetham-West}
\address{Department of Mathematics, Yale University, New Haven, CT, 06511}
\email[]{tamunonye.cheetham-west@yale.edu}

\begin{document}
\date{\today}
\begin{abstract}
We prove that every arithmetic lattice in $\PSL(2,\bbC)$ and every arithmetic lattice of the simplest type in $\PO(n,1)$, $n\ge 2$, is the normalizer of arbitrarily many of its sublattices. Combined with previous work, this result implies that every lattice in $\PSL(2,\bbC)$ has this property. In this way, we prove that the set of profinitely flexible lattices in $\PSL(2,\bbC)$ is either empty or countably infinite. Another result is that every finite group is realized as the full isometry group of an arithmetic hyperbolic $n$-manifold. The proof of this theorem is based on study of normalizers of lattices and subgroup growth theory.
\end{abstract}

\maketitle

\section{Introduction}
An arithmetic lattice $\Lambda$ in a semisimple Lie group $H$ has a property that its commensurator $\mathrm{Comm}_H(\Lambda)$ is dense in $H$ up to a finite index \cite{Bor66}. From the geometric point of view it shows that the associated locally symmetric quotient space has infinitely many hidden symmetries that correspond to the elements of the normalizers in $H$ of the finite index subgroups of $\Lambda$. This makes the problem of controlling the normalizers and associated isometry groups much more challenging than in the non-arithmetic case. Not surprisingly, several previously known results were obtained under an extra assumption that the lattices to which they apply should be non-arithmetic (see e.g. \cite{ACM, BelLubotzkyIsom}). In this paper we will show that this assumption is not always necessary and extend the results of \cite{ACM} and \cite{BelLubotzkyIsom} to a much wider class of lattices.

Let $H = \PO(n,1)$ be the group of isometries of the $n$-dimensional hyperbolic space $\calH^n$, and let $\Lambda\cong\pi_1(\calM)$, where $\calM$ is an arithmetic hyperbolic $n$-manifold. There are two situations where we want to understand the behavior of the normalizers in $H$ for arithmetic lattices:
\begin{enumerate}
    \item To show that a given finite-index subgroup $\Delta<\Lambda$ has infinitely many sublattices whose full normalizer in $H$ is the grpup $\Lambda$.
    \item To show that in every dimension $n\geq 2$, every finite group is the full group of isometries for infinitely many arithmetic hyperbolic $n$-manifolds. 
\end{enumerate}

In Section~\ref{sec:normalizers}, following up on the work by Agol, Cheetham-West, and Minsky in~\cite{ACM}, we prove:  

\begin{thmn}[\ref{mainthm1}]
Let $\Lambda$ be a torsion-free arithmetic lattice in $\PO(3,1)$ or a torsion-free arithmetic lattice of the simplest type in $\PO(n,1)$, $n\ge 2$.
For a finite-index subgroup $\Delta<\Lambda$, there is a finite-index subgroup $\Theta<\Delta$ such that $\Aut(\Theta)=\Lambda$. 
\end{thmn}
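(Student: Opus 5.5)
We read $\Aut(\Theta)$, via Mostow rigidity, as the normalizer $N_H(\Theta)$. It then suffices to produce $\Theta<\Delta$ with $\Theta\lhd\Lambda$ and $N_H(\Theta)\subseteq\Lambda$. Normality gives $\Lambda\subseteq N_H(\Theta)$, and together these give equality. Any such normalizer is a lattice commensurable with $\Lambda$, so it lies in $\mathrm{Comm}_H(\Lambda)$. The whole difficulty is that this commensurator is dense, unlike in the non-arithmetic setting of \cite{ACM}. The plan is to first confine $N_H(\Theta)$ to a fixed lattice $\Gamma'$ using congruence subgroups, and then break the finitely many remaining symmetries in the finite group $\Gamma'/\Lambda$ with a non-congruence homological quotient.

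\emph{Step 1 (a congruence cage).} Realize $\Lambda$ inside the arithmetic group $G(k)$, where $G$ is the admissible $k$-group defining it: the quaternionic $\mathrm{SL}_1(D)$ for $\PO(3,1)$, or $\mathrm{Spin}(f)$ for a quadratic form $f$ in the simplest-type case. Pick a prime $\mathfrak p$ of $k$ at which everything is unramified, and let $\Lambda(\mathfrak p)$ be the principal congruence subgroup. Take $\Theta_0$ to be the normal core in $\Lambda$ of $\Delta\cap\Lambda(\mathfrak p)$. Then $\Theta_0\lhd\Lambda$, $\Theta_0<\Delta$, and $\Theta_0$ is a congruence subgroup of some level $\mathfrak q$. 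By strong approximation, together with the standard description of normalizers of principal congruence subgroups as in Borel--Prasad, $N_H(\Theta_0)$ is a lattice $\Gamma'\supseteq\Lambda$ with $[\Gamma':\Lambda]<\infty$. More generally, any element of $\mathrm{Comm}_H(\Lambda)$ normalizing a subgroup whose congruence closure is $\Theta_0$ lies in $\Gamma'$.

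\emph{Step 2 (control the congruence closure).} Here we use that $\Theta_0$ has many non-congruence abelian quotients. For $n=3$ this comes from virtual positive $b_1$ (Agol, Kahn--Markovic); for simplest type it comes from Millson's totally geodesic hypersurface construction, after passing to a deeper normal subgroup of $\Lambda$ if needed. Fix a prime $\ell$ coprime to the order of $\Gamma'/\Theta_0$ and to the residue characteristics involved. Put $V=H_1(\Theta_0,\F_\ell)$, which is a $\F_\ell[\Gamma'/\Theta_0]$-module.

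Any $\Theta$ with $\Theta_0'\Theta_0^\ell\subseteq\Theta\subseteq\Theta_0$ has an abelian $\ell$-group quotient $\Theta_0/\Theta$. One checks that this quotient is invisible to the congruence topology: the congruence kernel is central or trivial here, and the relevant congruence quotients of $\Theta_0$ are pro-$\mathfrak p$ by Lie-algebra considerations. Hence the congruence closure of $\Theta$ is $\Theta_0$. Since conjugation commutes with taking congruence closure inside $\mathrm{Comm}_H(\Lambda)$, we get $N_H(\Theta)\subseteq N_H(\Theta_0)=\Gamma'$.

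\emph{Step 3 (breaking the symmetry).} We need a $\Lambda$-invariant subspace $W\subset V$ that is not invariant under any $g\in\Gamma'\setminus\Lambda$. The subgroup $\Theta=\ker(\Theta_0\to V/W)$ is then normal in $\Lambda$, contained in $\Delta$, and by Step 2 satisfies $N_H(\Theta)=\mathrm{Stab}_{\Gamma'}(W)=\Lambda$.

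To find $W$, first pass to a deeper normal $\Theta_0$ so that, by a Gaschütz-type argument combined with homology growth, $V$ contains a copy of the regular module $\F_\ell[\Gamma'/\Theta_0]$. Inside it take $W=\F_\ell[\Lambda/\Theta_0]\cdot v$ for a suitable $v$. Its stabilizer in $\Gamma'/\Theta_0$ is exactly $\Lambda/\Theta_0$, and repeating with different choices of $v$ or $\ell$ gives infinitely many such $\Theta$.

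I expect Step 2 to be the main obstacle. One must show rigorously that a normalizing element must preserve the congruence closure; this uses that $\mathrm{Comm}_H(\Lambda)\subseteq G(k)$ up to the adjoint form, so commensurating elements act continuously in the congruence topology. One must also check that the chosen $\ell$-quotients really are non-congruence. If the congruence subgroup property issue causes trouble (e.g. when the congruence kernel is unknown), I would instead use the covolume bound $\mathrm{vol}(N_H(\Theta))\ge\mu_{\min}$ on the commensurability class, so that only finitely many maximal lattices can contain $\Theta$, and then run Step 3 against each of them simultaneously.
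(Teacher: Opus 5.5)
\textbf{There is a genuine gap in Step 1, and it propagates through Steps 2 and 3.}

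\emph{The congruence claim fails.} Step 1 asserts that $\Theta_0$, the $\Lambda$-core of $\Delta\cap\Lambda(\mathfrak p)$, is a congruence subgroup. This is false in general. Since $\Theta_0\subseteq\Delta$, and any subgroup containing a congruence subgroup is itself congruence, $\Theta_0$ can be congruence only if $\Delta$ is. Likewise, $\Lambda$ itself need not be congruence.

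The theorem quantifies over \emph{all} finite-index $\Delta<\Lambda$. For these lattices the congruence subgroup property fails badly: they are virtually large, so the congruence kernel is infinite, not ``central or trivial''. Non-congruence $\Delta$ is therefore the typical case. For such $\Delta$, no subgroup of $\Delta$ is congruence, and the congruence closure $\widehat{\Theta_0}$ of $\Theta_0$ strictly contains $\Theta_0$.

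\emph{What Step 2 actually gives.} Your $\ell$-argument in Step 2 then yields only $N_H(\Theta)\subseteq N_H(\widehat{\Theta_0})$. This is a lattice containing $\Lambda$, but it need not normalize $\Theta_0$. So it does not act on $V=H_1(\Theta_0,\F_\ell)$, and Step 3 has no finite group $\Gamma'/\Theta_0$ to work with.

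\emph{A possible repair, and why it still falls short.} You could iterate $\Theta\mapsto\mathrm{core}_{N_H(\widehat\Theta)}(\Theta)$. The resulting normalizers form an increasing chain of lattices containing $\Lambda$. This chain stabilizes only because there are finitely many such lattices, which is exactly the paper's Proposition~\ref{finitelymany}.

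Even with that repair, Step 3 is unproved. Gasch\"utz's theorem concerns relation modules of free groups. Nothing you cite guarantees that $H_1(\Theta^*,\F_\ell)$ contains a free $\F_\ell[\Gamma^*/\Theta^*]$-module, or even that $\Gamma^*/\Theta^*$ acts faithfully. Moreover, passing to a deeper $\Theta_0$ can enlarge its normalizer, so the finite group you are trying to control keeps changing.

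\emph{How the paper breaks the symmetry instead.} This symmetry-breaking step is exactly where the paper uses different input. It takes a closed geodesic on which $\Gamma/\Delta'$ acts simply transitively (Lemma~\ref{freeorbitlemma}). It then applies omnipotence/MSQT (Theorems~\ref{sammy} and~\ref{thm:omnirelative}) to get a finite quotient in which the conjugates of $\gamma$ indexed by $\Lambda$-cosets have order $\kappa N_1$, while all other conjugates have order $\kappa N_2\neq\kappa N_1$. An order-distinguishing quotient of this kind is what replaces your ``regular module'' claim.

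\emph{Your fallback does not work as stated either.} You cannot run Step 3 against all maximal lattices $\Gamma_i\supseteq\Lambda$ simultaneously. No single finite-index subgroup can be normal in two distinct maximal lattices $\Gamma_i\neq\Gamma_j$. If it were, $\langle\Gamma_i,\Gamma_j\rangle$ would lie in its normalizer, which is a lattice, contradicting maximality. So there is no common finite group acting on $V$. This is why the paper treats the $\Gamma_i$ one at a time by induction, using relatively hyperbolic Dehn filling to preserve the earlier constraints while killing the new symmetries.
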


Here $\Aut(\Delta)$ denotes the automorphisms of the lattice $\Delta$. It is a group $\Aut(\Delta) = N_H(\Delta)$, the normalizer of the lattice $\Delta$ in $H = \PO(n,1)$. The same result also holds for $\Aut^+(\Delta)$ which denotes the group of orientation-preserving automorphisms of $\Delta$.
The main tool and the context for the theorem and its corollaries given below is the good control over images (in finite quotients) of independent elements satisfied by (relatively) word-hyperbolic groups that act properly and cocompactly on $\mathrm{CAT}(0)$ cube complexes. These cubulated groups satisfy the property of {\it omnipotence} in the sense of Wise, which is known elsewhere in the literature as the {\it strong commanding property}. This property can be thought of as encoding and quantifying the conjugacy separability properties of cubulable (relatively) hyperbolic groups. 
 
The second main result of the paper is about the isometry groups of arithmetic hyperbolic $n$-manifolds. In Section~\ref{sec:isometries} we extend the subgroups counting method of Belolipetsky and Lubotzky in \cite{BelLubotzkyIsom} to prove:

\begin{thmn}[\ref{mainthm2}]
For every $n\geq 2$ and every finite group $G$ there exist infinitely many arithmetic hyperbolic $n$-manifolds $\mathcal{M}$ of the simplest type with $\Isom(\mathcal{M})\cong G$. 
\end{thmn}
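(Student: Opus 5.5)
The strategy is to follow the subgroup-growth approach of \cite{BelLubotzkyIsom}, and to use Theorem~\ref{mainthm1} to handle the arithmetic difficulty, namely that commensurators are dense. First fix a finite group $G$ and $n\ge 2$. Choose a torsion-free arithmetic lattice $\Lambda$ of the simplest type in $\PO(n,1)$ and a finite-index subgroup $\Delta<\Lambda$ that surjects onto a free group of rank at least $2$. Such $\Delta$ exist because simplest-type lattices are virtually large: one can use totally geodesic hypersurfaces together with the results of Millson and Lubotzky. Theorem~\ref{mainthm1} then gives a finite-index $\Theta<\Delta$ with $\Aut(\Theta)=N_H(\Theta)=\Theta$; we apply it with $\Lambda$ replaced by $\Delta$, which is again a torsion-free simplest-type lattice. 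Hence the manifold $\calH^n/\Theta$ has trivial isometry group, and $\Theta$ is a maximal lattice among those that normalize it. Since $\Theta<\Delta$ has finite index, $\Theta$ is also large. Therefore $\Theta$ surjects onto a free group $F_r$, and hence onto $G\times$ (something), with as much freedom as we need.

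Next we count. Let $\Theta$ map onto a free group $F$ of rank $r\ge 2$, and consider normal subgroups $N\lhd\Theta$ with $\Theta/N\cong G$, where $N$ is the preimage of a normal subgroup of $F$. Every such $N$ satisfies $\Theta\le N_H(N)$. Any element of $N_H(N)$ lies in the commensurator of $\Theta$, and $N_H(N)$ is a lattice containing $\Theta$ with index bounded in terms of covolume. So there are only finitely many lattices $\Gamma$ with $\Theta<\Gamma$ and $[\Gamma:\Theta]\le C$, where $C$ is the ratio of $\mathrm{vol}(\calH^n/\Theta)$ to the minimal covolume (Kazhdan--Margulis, or Borel's finiteness for arithmetic lattices). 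For each such $\Gamma\ne\Theta$ we must rule out $N\lhd\Gamma$.

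The main obstacle is to show that normal subgroups $N$ of $\Theta$ with quotient $G$ that are also normalized by some strictly larger $\Gamma$ are rare compared with the total number. The number of epimorphisms $F_r\to G$ grows like $|G|^r$. For a fixed $\Gamma\supsetneq\Theta$, the condition that $N$ is normal in $\Gamma$ means $N$ is invariant under conjugation by some $\gamma\in\Gamma\setminus\Theta$. Here $\gamma$ does not normalize $\Theta$, since $N_H(\Theta)=\Theta$. So $N$ is contained in $\Theta\cap\gamma\Theta\gamma^{-1}$, a proper finite-index subgroup of $\Theta$. Then $N$ is the kernel of a map from $\Theta$ factoring through constraints coming from the intersection, and the count of such maps should be at most $|G|^{r'}$ with $r'<r$. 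Alternatively, following \cite{BelLubotzkyIsom}, one counts index-$|G|$ subgroups of $\Gamma$ versus those of $\Theta$ via the Schreier rank formula: the rank of the free quotients drops by index considerations. Letting $r\to\infty$, which is possible by passing to deeper subgroups of the large group $\Theta$ while keeping $N_H=\mathrm{self}$ by Theorem~\ref{mainthm1} again, most $N$ have $N_H(N)=\Theta$. Then $\Isom(\calH^n/N)=N_H(N)/N\cong G$. If $G$ is not trivially handled, one must also check that $N$ is torsion-free, which holds because $\Theta$ is.

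Infinitely many manifolds arise in one of two ways. We can apply Theorem~\ref{mainthm1} to infinitely many distinct $\Delta$, giving $\Theta$ of unbounded covolume. Or, for fixed $\Theta$, the counting gives many $N$, but they have equal volume and may be isometric. The first option is cleaner, since distinct volumes give distinct manifolds.
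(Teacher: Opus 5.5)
\textbf{There is a genuine gap at your first step.} Theorem~\ref{mainthm1} cannot produce a self-normalizing lattice unless you already have one. For $\Delta<\Lambda$ it gives $\Theta<\Delta$ with $\Aut(\Theta)=\Lambda$. Applying it with $\Lambda$ replaced by $\Delta$ therefore gives $N_H(\Theta)=\Delta$, so $\Isom(\calH^n/\Theta)\cong\Delta/\Theta$. This group is nontrivial and uncontrolled, because $\Theta$ is the kernel of an omnipotence quotient. To get $N_H(\Theta)=\Theta$ you would have to feed in a lattice that is already self-normalizing. But a torsion-free self-normalizing arithmetic lattice is exactly a manifold with trivial isometry group, which is the case $G=1$ of the theorem you are proving. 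So your scheme (take $N\lhd\Theta$ with $\Theta/N\cong G$ and show $N_H(N)=\Theta$) has no starting point. The same misreading is reused when you pass to deeper subgroups ``keeping $N_H=\mathrm{self}$'' and when you produce infinitely many examples.

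Even granting such a $\Theta$, the counting does not close. For fixed $\Theta$ the rank $r$ of its free quotients is bounded. Both the candidates $N$ and the overlattices $\Gamma\supsetneq\Theta$ then form fixed finite sets, so ``rare'' has no content. Your bound $|G|^{r'}$ is also not the right one. The correct estimate is that each bad kernel lies in one of $s$ fixed proper subgroups of $F_r$, giving at most $s\,C_G(|G|/2)^r$ bad maps, where $s$ is the number of overlattices. This beats $|G|^r$ only if $r$ is large compared with $s$, and increasing $r$ forces a deeper $\Theta$ with more overlattices.

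This is exactly what the paper's proof is built to avoid. It fixes an \emph{infinite-index} normal subgroup $M\lhd\Delta_1$ with $\Delta_1/M$ free. By the classification of maximal arithmetic subgroups, the maximal lattices containing $M$ form a fixed finite list $\Gamma_1,\dots,\Gamma_k$ (Proposition~\ref{prop-aut2}(v)). It then runs the Belolipetsky--Lubotzky subgroup-growth count in $\Delta_1/M$ simultaneously for all $\Gamma_i$ (Proposition~\ref{prop-aut1}). This yields $A\supseteq M$ with $N_{\Delta_i}(A)/A\cong G$ and $N_{\Gamma_i}(A)=N_{D_i}(A)$. Since $N_H(A)$ lies in some $\Gamma_i$ whatever $A$ is, the set of overlattices never moves. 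The remaining ingredients have no counterpart in your proposal: torsion-freeness of $D_i$ via the mod-$l$ congruence argument, and the passage from $A$ to $B=AC_j$ to remove the centralizer $C_j$.
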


The main challenge in proving the theorems is dealing with the dense commensurator of an arithmetic lattice $\Lambda$. As the first step, we show that although there are infinitely many maximal lattices in $\mathrm{Comm}_H(\Lambda)$, only finitely many of them contain $\Lambda$ (see Proposition~\ref{finitelymany}). After this we are left with dealing with a delicate problem of controlling the normalizers of the sublattices of $\Lambda$. Here the idea is to force them to be restricted to the finite set of the maximal lattices.

For finitely generated, residually finite groups the property of \emph{profinite rigidity}, i.e. being determined up to isomorphism by the set of all actions on finite sets, is not well-behaved with respect to taking finite-index subgroups or finite group extensions. Famously, Baumslag's nilpotent examples $(\bbZ/25\bbZ)\rtimes_\alpha \bbZ$ and $(\bbZ/25\bbZ)\rtimes_{\alpha^2}\bbZ$ \cite{Baumslag} (where $\alpha$ is multiplication by $6$ (mod $25$)) are virtually infinite cyclic non-isomorphic groups with isomorphic profinite completions. In contrast, Bridson and Reid \cite{BRPrasad} proved that if a lattice in $\PSL (2,\bbC) \cong \Isom^+(\calH^3)$ is profinitely rigid, so also is its normalizer in $\PSL (2,\bbC)$. That this normalizer is a finite index group extension is a consequence of the Mostow--Prasad rigidity theorems \cite{Mostow}\cite{Prasad}.

Combining Theorem~\ref{mainthm1}, the main result of \cite{ACM}, and the normalizer inheritance theorem of Bridson--Reid \cite[Theorem~4.4]{BRPrasad}, we obtain the following corollary:

\begin{corollary}\label{cor1}
The set of profinitely flexible lattices in $\PSL(2,\bbC)$ is either empty or countably infinite.
\end{corollary}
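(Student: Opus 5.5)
Suppose the set of profinitely flexible lattices in $\PSL(2,\bbC)$ is nonempty, and fix a profinitely flexible lattice $\Gamma_0$. The set of all lattices up to conjugacy (equivalently, up to isomorphism, by Mostow--Prasad rigidity) is countable, since lattices are finitely presented and there are only countably many finite presentations. So it suffices to produce infinitely many pairwise non-isomorphic profinitely flexible lattices. The mechanism is the contrapositive of Bridson--Reid \cite[Theorem~4.4]{BRPrasad}: if $\Theta$ is a lattice with $N_{\PSL(2,\bbC)}(\Theta)=\Gamma_0$ and $\Theta$ were profinitely rigid, then $\Gamma_0$ would be profinitely rigid. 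Hence every lattice whose normalizer is $\Gamma_0$ is profinitely flexible.

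We therefore need infinitely many finite-index sublattices $\Theta<\Gamma_0$ with $\Aut^+(\Theta)=N_{\PSL(2,\bbC)}(\Theta)=\Gamma_0$. To make Theorem~\ref{mainthm1} applicable, we first pass to a torsion-free finite-index subgroup $\Lambda$ of $\Gamma_0$ with $\Aut^+(\Lambda)=\Gamma_0$. If $\Gamma_0$ is non-arithmetic, this is the main result of \cite{ACM}. If $\Gamma_0$ is arithmetic, we use the torsion-free arithmetic case of Theorem~\ref{mainthm1} (the paper's abstract says this extends to every arithmetic lattice in $\PSL(2,\bbC)$). The delicate point is the torsion in $\Gamma_0$. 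I would handle it by taking a torsion-free normal subgroup $\Delta\lhd\Gamma_0$ (Selberg's lemma), applying the theorem inside a suitable torsion-free lattice, and arguing that the resulting normalizer is exactly $\Gamma_0$. Equivalently, I would cite the statement ``every lattice in $\PSL(2,\bbC)$ is the normalizer of arbitrarily many of its sublattices,'' which the abstract says combines Theorem~\ref{mainthm1} with \cite{ACM}.

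Iteration then gives infinitely many such sublattices. Given one $\Theta_1$ with $N(\Theta_1)=\Gamma_0$, apply the theorem again to a proper finite-index subgroup $\Delta\subsetneq\Theta_1$, say of index at least $2$. This yields $\Theta_2<\Delta$ with $N(\Theta_2)=\Gamma_0$ and $[\Gamma_0:\Theta_2]>[\Gamma_0:\Theta_1]$. Repeating produces $\Theta_k$ with strictly increasing covolumes, so they are pairwise non-isomorphic by Mostow rigidity. Each $\Theta_k$ is profinitely flexible by Bridson--Reid, so the set of profinitely flexible lattices is infinite and hence countably infinite.

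The main obstacle is the torsion and non-arithmetic bookkeeping: ensuring that for an arbitrary, possibly torsion-containing, lattice $\Gamma_0$ we really obtain sublattices whose full normalizer equals $\Gamma_0$. I would rely on the combined statement from the abstract and \cite{ACM} for this step.
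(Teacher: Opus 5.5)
Your proposal is correct and follows the paper's proof. The paper likewise combines Theorem~\ref{mainthm1} with \cite[Theorems~4.1 and 4.2]{ACM} to conclude that every lattice in $\PSL(2,\bbC)$ is the normalizer of infinitely many of its sublattices, then applies the contrapositive of the Bridson--Reid normalizer inheritance theorem \cite[Theorem~4.4]{BRPrasad}; your covolume and countability remarks just make explicit what the paper leaves implicit. On torsion, the paper also simply cites that combined statement, as your fallback does; your detour does not work as written, though, since applying Theorem~\ref{mainthm1} to a torsion-free $\Lambda$ with $\Aut^+(\Lambda)=\Gamma_0$ gives sublattices with normalizer $\Lambda$, not $\Gamma_0$, and obtaining that first $\Lambda$ is already an instance of the torsion case.
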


As another corollary of Theorem~\ref{mainthm1} and the main result of \cite{ACM}, we have:
\begin{corollary}\label{cor2}
    The following are equivalent:
    \begin{enumerate}
        \item Every lattice in $\PSL(2,\bbC)$ is profinitely rigid.
        \item Every fibered lattice in $\PSL(2,\bbC)$ is profinitely rigid.
        \item Every special lattice in $\PSL(2,\bbC)$ is profinitely rigid. 
    \end{enumerate}
\end{corollary}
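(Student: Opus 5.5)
We need to prove Corollary 2: (1) every lattice in $\PSL(2,\bbC)$ is profinitely rigid $\iff$ (2) every fibered lattice is $\iff$ (3) every special lattice is. The implications (1)$\Rightarrow$(2) and (1)$\Rightarrow$(3) are trivial, so the content is (2)$\Rightarrow$(1) and (3)$\Rightarrow$(1). In each case I would deduce rigidity of an arbitrary lattice $\Gamma$ by placing $\Gamma$ as the full normalizer of a fibered (respectively special) sublattice $\Theta$, and then apply the Bridson--Reid normalizer inheritance theorem \cite[Theorem~4.4]{BRPrasad}: if $\Theta$ is profinitely rigid, so is $N_{\PSL(2,\bbC)}(\Theta)$.

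The key steps in order are as follows. First, let $\Gamma$ be any lattice in $\PSL(2,\bbC)$. By Selberg's lemma choose a torsion-free finite-index normal subgroup $\Lambda_0\lhd\Gamma$. By Agol's virtual fibering and virtual specialness theorems (for closed manifolds, and Wise's work for the cusped case), there is a finite-index subgroup $\Delta<\Lambda_0$ that is special, and a further finite-index subgroup that fibers.

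Second, I need a sublattice $\Theta<\Delta$ with $\Aut^+(\Theta)=N_{\PSL(2,\bbC)}(\Theta)=\Gamma$.
\begin{itemize}
\item If $\Gamma$ is non-arithmetic, this is the main result of \cite{ACM}.
\item If $\Gamma$ is arithmetic, I apply Theorem~\ref{mainthm1} (its orientation-preserving version). Its hypothesis asks for a torsion-free $\Lambda$, while $\Gamma$ may have torsion, so I must check that the proof of Theorem~\ref{mainthm1} goes through with $\Lambda=\Gamma$. Roughly, that proof only uses torsion-freeness of the subgroup $\Delta$ where omnipotence is applied, together with finiteness of the maximal lattices containing $\Lambda$. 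Alternatively, I would invoke the general statement from the abstract, that every arithmetic lattice is the normalizer of sublattices.
\end{itemize}

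Third, I check that fiberedness and specialness pass to $\Theta$. Specialness passes to finite-index subgroups, since subgroups of special groups are special. Fiberedness does not automatically pass: a finite cover of a fibered manifold is fibered, because one can pull back the fibration, provided the fiber's fundamental group intersects appropriately. In fact any finite cover of a fibered 3-manifold fibers, since the composite map $\Theta\to\Delta\to\bbZ$ has nontrivial image and finitely generated kernel, the kernel being a finite-index subgroup of the fiber group. So $\Theta$ is fibered and special.

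Finally, hypothesis (2) or (3) gives that $\Theta$ is profinitely rigid, and Bridson--Reid then yields that $\Gamma=N(\Theta)$ is profinitely rigid. Hence (1) follows.

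I expect the main obstacle to be step two in the arithmetic case with torsion, that is, making sure Theorem~\ref{mainthm1} applies when $\Gamma$ itself is not torsion-free. I would handle this by arguing directly that the construction in Theorem~\ref{mainthm1} only requires $\Lambda$ to be a lattice containing the torsion-free $\Delta$ with finite index. If that fails, I would fall back on the paper's general statement for all arithmetic lattices.
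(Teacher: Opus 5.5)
Your proposal is correct and is essentially the paper's argument, which follows Corollary~4.7 of \cite{ACM}. You realize an arbitrary lattice $\Gamma$ as $N(\Theta)$ for a sublattice $\Theta$ lying inside a fibered and special finite-index subgroup (Theorem~\ref{mainthm1} in the arithmetic case, \cite{ACM} otherwise), note that fiberedness and specialness pass to $\Theta$, and then apply the Bridson--Reid normalizer inheritance theorem.

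The torsion issue you flag is one the paper passes over silently, since it applies Theorem~\ref{mainthm1} to all lattices. Your resolution is the right one: torsion-freeness is only needed for the auxiliary subgroups where Lemma~\ref{freeorbitlemma} and omnipotence are applied. Two small points to tighten:
\begin{enumerate}
\item Take $\Theta$ inside a single finite-index subgroup that is simultaneously fibered and special; as written, your $\Delta$ is only special.
\item Do not rely on the fallback to the abstract's phrasing, because it does not place $\Theta$ inside that subgroup.
\end{enumerate}
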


Notice that these corollaries for non-arithmetic lattices were already obtained in~\cite{ACM}. Combining the results of \cite{ACM} and the present paper allows us to remove all the arithmeticity related assumptions.  

The paper is organized as follows. In Section~\ref{sec prelims} we collect some results about arithmetic groups, omnipotence, and geodesics of hyperbolic manifolds to be used later on. In Section~\ref{sec:normalizers} we prove Theorem~\ref{mainthm1} and discuss the corollaries. In Section~\ref{sec:isometries} we prove Theorem~\ref{mainthm2}.
\medskip

\noindent
\textbf{Acknowledgments.} 
We thank Ian Agol whose questions and comments initiated this project. T.C. is grateful to Yair Minsky for his encouragement of this project. We thank Matthew Stover for his comments on a preliminary version of the paper.

Part of this work was completed when the authors visited the Simons Laufer Mathematical Institute in March 2026 for the {\it Topological and Geometric Structures in Low Dimensions} program. The work of M.B. is partially supported by the CNPq grant 300233/2025-6 and the FAPERJ grant E-26/204.250/2024.

\section{Preliminaries}\label{sec prelims}
\subsection{Arithmetic subgroups.} A \emph{lattice} in a Lie group $H$ is a discrete subgroup of finite covolume. 
Lattices in $H = \Isom(\calH^n)$ can be constructed using number theory.

Recall that the isometry group $H$ can be identified with $\Orth_0(n, 1)$, the subgroup of the orthogonal group $\Orth(n, 1)$ which preserves the upper-half space. It is isomorphic to the projective orthogonal group $\PO(n, 1) =\Orth(n, 1)/\{+1,-1\}$. The subgroup $\SO_0(n, 1)$ of $\Orth_0(n, 1)$ of all the elements with determinant $1$ is the group of orientation preserving isometries.

Let $k$ be a totally real number field with the ring of integers $\calO$. Consider a quadratic form $f$ of signature $(n,1)$ defined over $k$ such that for every non-identity embedding $\sigma : k\to\bbR$ the form $f^\sigma$ is positive definite. Let $\Gamma = \Orth_0(f,\calO)$ be the subgroup of the integral automorphisms of $f$ in  $H$. By a classical theorem of Borel and Harish-Chandra, the group $\Gamma$ is a lattice in $H$.  Lattices obtained in this way and all the subgroups of $H$ which are commensurable with them are called \emph{arithmetic lattices of the simplest type} or \emph{arithmetic lattices of type~I}. There are also type~II and type~III arithmetic lattices in $H$ but we will not consider them in this paper except when $n = 3$ (see \cite{BBKS} for more details on the classification of arithmetic subgroups of $\PO(n,1)$). A detailed study of arithmetic lattices in dimension $3$ is presented in \cite{MR}.


An important property that characterizes arithmetic hyperbolic manifolds of the simplest type in dimension $n\ge3$ is that they contain infinitely many totally geodesic hypersurfaces (cf.~\cite{BBKS}). This allowed Bergeron, Haglund and Wise \cite{BHW} to show that the simplest type lattices in $H$ are virtually special cubulable groups, a property which implies omnipotence that we discuss in the next subsection.  

\subsection{Omnipotence.}
We will need a few definitions. 
\begin{definition}
A finite subset $\{ g_1,\dots,g_m\}$ of infinite order elements in a group $\Gamma$ is called \emph{independent} if no non-zero power of $g_i$ is conjugate to a non-zero power of $g_j$ for $i \neq j$.
\end{definition}
\begin{definition}
Let $\Gamma > \Delta$ be a group and a finite-index subgroup. For $\gamma\in \Gamma$, the \emph{$\Delta$-degree} of $\gamma$ is the smallest positive integer $K$ for which $\gamma^{K}\in \Delta$. 
\end{definition}
\begin{definition}
A group $\Gamma$ is called \emph{omnipotent} if for any independent subset $\{ g_1,\dots,g_m\}$ there is a constant $\kappa$ so that for any $m$-tuple of positive integers $(N_1,\dots,N_m)$, there is a finite quotient $q:\Gamma\twoheadrightarrow Q$ with $\ord(q(g_i))=\kappa N_i$ for $1\leq i\leq m$. 
\end{definition}

The notion of omnipotence was first defined by Wise in \cite{omnipotence} where he proved that free groups have this property \cite[Theorem~3.5]{omnipotence}. In \cite{WiseHaken}, Wise used the Malnormal Special Quotient Theorem \cite[Theorem~12.2]{WiseHaken} to prove the following result.

\begin{theorem}[{\cite[Theorem 14.26]{WiseHaken}}]\label{WiseOmnipotent}
    Let $\Gamma$ be a virtually special, word-hyperbolic group. Then $\Gamma$ is omnipotent. 
\end{theorem}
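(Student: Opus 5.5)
The plan is to follow Wise's approach: reduce to a special subgroup of finite index, use the Malnormal Special Quotient Theorem to realize prescribed orders, and then pass back to $\Gamma$. Since $\Gamma$ is virtually special, it has a finite-index normal subgroup $\Gamma_0$ that is the fundamental group of a compact special cube complex; we may take $\Gamma_0$ normal by intersecting conjugates. Let $\{g_1,\dots,g_m\}$ be independent in $\Gamma$ and let $d_i$ be the $\Gamma_0$-degree of $g_i$. The elements $h_i = g_i^{d_i}\in\Gamma_0$ are still of infinite order. Within $\Gamma_0$, however, they may fail to be independent, since $\Gamma$-conjugacy is coarser than $\Gamma_0$-conjugacy. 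So we work with the finite collection of all $\Gamma$-conjugates of the $h_i$, taken up to $\Gamma_0$-conjugacy. There are finitely many of these because $[\Gamma:\Gamma_0]<\infty$.

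The next step is to form malnormal data. In a word-hyperbolic group, each infinite-order element $h$ has a maximal virtually cyclic subgroup $E(h)$ containing it. Independence of the $g_i$ means that for $i\ne j$ the groups $E(g_i)$ and $E(g_j)$ are not conjugate. Consider the family $\{E(h)\cap\Gamma_0\}$ over the representatives above. After passing to a deeper finite-index characteristic subgroup, so that these subgroups become infinite cyclic and the family becomes almost malnormal, we are in the setting of the Malnormal Special Quotient Theorem \cite[Theorem~12.2]{WiseHaken}. That theorem gives the following: there is a constant $M$ such that for all finite-index subgroups $C_j'<C_j$ with indices divisible by $M$, the quotient $\Gamma_0/\langle\langle C_j'\rangle\rangle$ is virtually special and hyperbolic, and each $C_j$ injects as $C_j/C_j'$.

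Now fix $(N_1,\dots,N_m)$. We kill $h_i^{M N_i}$ equivariantly, using the same exponent on every $\Gamma$-conjugate of $h_i$. This makes the normal closure $\Gamma$-invariant, so we obtain a quotient $\bar\Gamma$ of $\Gamma$. In $\bar\Gamma$, each image $\bar g_i$ has order exactly $d_i M N_i$, up to a bounded correction coming from how $g_i$ acts on $E(g_i)$ by torsion. Because $\bar\Gamma$ is virtually special and hyperbolic, it is residually finite. Separating the finitely many nontrivial elements $\bar g_i^{\,k}$ with $k<d_iMN_i$ then yields a finite quotient $Q$ in which the $g_i$ have these exact orders.

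The orders obtained so far are $d_iMN_i$, whereas the statement asks for $\kappa N_i$ with a single uniform $\kappa$. To fix this, set $D=\operatorname{lcm}(d_i)$ and run the construction with exponents $M (D/d_i) N_i$ in place of $MN_i$. The orders then become $DMN_i$, so $\kappa=DM$ works, after absorbing the finite torsion of $E(g_i)$ into $\kappa$ as well.

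The main obstacle is establishing almost malnormality of the relevant cyclic family while keeping the conjugates of the different $g_i$ separate. Independence is exactly the hypothesis that makes the maximal elementary subgroups of distinct $g_i$ non-conjugate, so the family is malnormal up to finite index. The first thing to try here is the standard fact that in a hyperbolic group, distinct maximal elementary subgroups intersect finitely.
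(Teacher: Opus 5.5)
Your argument is correct in outline, but it takes a different route from the one the paper relies on. The paper gives no proof beyond citing Wise. The argument it has in mind, mirrored in the proof of Theorem~\ref{thm:omnirelative}, works directly in $\Gamma$:
\begin{itemize}
\item independence makes $\{E(g_1),\dots,E(g_m)\}$ an almost malnormal family of quasiconvex subgroups of the virtually special hyperbolic group $\Gamma$ itself;
\item one chooses $\kappa$ so that $\langle g_i^{\kappa}\rangle$ lies in the MSQT subgroup of $E(g_i)$ and is normal in $E(g_i)$;
\item filling $\langle g_i^{\kappa N_i}\rangle$ gives a hyperbolic virtually special, hence residually finite, quotient in which $g_i$ has order exactly $\kappa N_i$, because $E(g_i)/\langle g_i^{\kappa N_i}\rangle$ injects.
\end{itemize}
You instead descend to a torsion-free special normal subgroup $\Gamma_0$, fill a $\Gamma$-invariant family of cyclic subgroups there, and climb back up using the degrees $d_i$ and the $\mathrm{lcm}$ trick. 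Your version needs MSQT only for a malnormal family of cyclic subgroups in a torsion-free special group. The direct version is shorter and avoids the orbit and degree bookkeeping, but it uses MSQT in the virtually special setting with torsion allowed.

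One step fails as written. Suppose you index the family by $\Gamma_0$-classes of the elements $xh_ix^{-1}$. Then the same subgroup can appear twice. If some $t\in\Gamma$ has $th_it^{-1}=h_i^{-1}$, then $h_i$ and $h_i^{-1}$ lie in different $\Gamma_0$-classes, since in a torsion-free hyperbolic group no nontrivial element is conjugate to its inverse. Yet $E(h_i)=E(h_i^{-1})$. Such a family is not almost malnormal, and passing to a deeper characteristic subgroup does not help.

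The fix is to index instead by $\Gamma/\Gamma_0E(h_i)$, using $C_x=xE(h_i)x^{-1}\cap\Gamma_0$.
\begin{itemize}
\item These subgroups are already infinite cyclic because $\Gamma_0$ is torsion-free, so no deeper subgroup is needed.
\item Malnormality in $\Gamma_0$ follows from $N_\Gamma(E(h))=E(h)$ together with the finite intersection of distinct maximal elementary subgroups (the fact you name at the end).
\item Since $E(h_i)\cap\Gamma_0$ is infinite cyclic, normal in $E(h_i)$, and contains $h_i$, every $e\in E(h_i)$ satisfies $eh_ie^{-1}=h_i^{\pm1}$. Hence the $\Gamma_0$-normal closure of the elements $xh_i^{n_i}x^{-1}$ over these representatives, with $n_i=M(D/d_i)N_i$, is exactly the $\Gamma$-normal closure $K$ of the $h_i^{n_i}$.
\end{itemize}

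Finally, drop the ``bounded correction''. We have $K\le\Gamma_0$, so $g_i^k\in K$ forces $d_i\mid k$. Injectivity of $C/\langle h_i^{n_i}\rangle$ for $C=E(h_i)\cap\Gamma_0$ then shows that $\bar g_i$ has order exactly $d_in_i=DMN_i$.
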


This theorem is sufficient to cover the cocompact case in our Theorem~\ref{mainthm1}. For the non-cocompact case we recall the result of Shepherd \cite{goodsam} who proved a general omnipotence statement for finite independent subsets of convex elements in virtually special cubulated groups. We recall that a cubulated group $\Gamma$ is a finitely generated group together with a geometric action on a $\mathrm{CAT}(0)$ cube complex $X$ by cubical automorphisms. We refer to \cite{WiseHaken, goodsam} for the definition of a virtually special cubulated group. Following \cite[Definition~2.19]{goodsam}, a subgroup $K$ of a cubulated group $\Gamma \curvearrowright X$ is called \emph{convex} if it stabilises a convex subcomplex $Y\subset X$ with finite quotient $Y/K$. An element $g \in \Gamma$ is convex if $\langle g \rangle$ is a convex subgroup of $\Gamma$.

\begin{theorem}[{\cite[Theorem 1.2]{goodsam}}]\label{sammy}
Let $\Gamma$ be a virtually special cubulated group. Then for any independent subset $\{ g_1,\dots,g_m\}$ of convex elements in $\Gamma$ there is a constant $\kappa$ so that for any $m$-tuple of positive integers $(N_1,\dots,N_m)$, there is a finite quotient $q:\Gamma\twoheadrightarrow Q$ with $\ord(q(g_i))=\kappa N_i$ for $1\leq i\leq m$. 
\end{theorem}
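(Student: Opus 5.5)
The statement is quoted from \cite{goodsam}; here is how I would reprove it, modelled on Wise's argument for free groups \cite[Theorem~3.5]{omnipotence} and its extension in \cite[Theorem~14.26]{WiseHaken}. The idea is to build finite covers of a compact special cube complex in which every elevation of the closed curve representing $g_i$ wraps exactly $\kappa N_i$ times, and then to take the normal core of such a cover.

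\textbf{Step 1: reduction to the special case.} Let $\Gamma'\lhd\Gamma$ be a finite-index normal subgroup with $X/\Gamma'$ compact and special.
\begin{itemize}
\item Replace each $g_i$ by a power $g_i^{d_i}\in\Gamma'$; the degrees $d_i$ will be absorbed into $\kappa$.
\item Under this replacement the $\Gamma$-conjugacy class of $\langle g_i\rangle$ splits into finitely many $\Gamma'$-conjugacy classes. This finite enlarged family is still independent in $\Gamma'$.
\item Having orders $\kappa' N_i$ for all $\Gamma'$-conjugates in a finite quotient of $\Gamma'$ passes to $\Gamma$ by inducing. Concretely, take the intersection of the $\Gamma$-conjugates of the kernel, which is normal in $\Gamma$ and has finite index.
\end{itemize}
So I may assume $\Gamma$ is special, with $\bar X=X/\Gamma$ compact special.

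\textbf{Step 2: geometric setup.} Convexity of $\langle g_i\rangle$ lets me represent $g_i$ by a local isometry $c_i\colon C_i\to\bar X$, where $C_i$ is a compact cube complex with $\pi_1C_i=\bbZ$, essentially a cylinder or circle. Then:
\begin{itemize}
\item Convex subgroups of virtually special groups are separable, with a virtual retraction by Haglund--Wise canonical completion and retraction.
\item Double cosets of convex subgroups are separable as well.
\item Using these two facts, I pass to a finite cover of $\bar X$ in which the elevations of the $C_i$ are embedded and pairwise disjoint.
\item Independence then says that the elevations of different $C_i$ do not carry commensurable conjugates. After a further cover I expect the family of cyclic subgroups to become malnormal, that is, almost malnormal with the finite overlaps killed.
\end{itemize}

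\textbf{Step 3: the cover construction.} Given $(N_1,\dots,N_m)$, I take the $\kappa N_i$-fold cyclic covers of the chosen elevations of the $C_i$. I glue them to the finite cover of $\bar X$ by canonical completion, extended to compact special complexes containing embedded convex subcomplexes. This should produce a finite cover $\hat X\to\bar X$ in which each elevation of $C_i$ has degree in $\kappa N_i\bbZ$. Taking the normal core $Q$, the order of $q(g_i)$ is the least common multiple of the degrees of all elevations of $C_i$ to $\hat X$.

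\textbf{Main obstacle.} The hard step is controlling every elevation, not just the ones I built in. Canonical completion can introduce elevations whose degrees are divisors or multiples of unwanted numbers, which breaks the exact equality $\ord(q(g_i))=\kappa N_i$. I would first try to make the completion degree-preserving on the extra elevations. Concretely, I would arrange that each new elevation of $C_i$ either maps with degree exactly $\kappa N_i$ or is a lift of a degree already dividing it; the constant $\kappa$ is chosen as the least common multiple of the degrees forced in Step 2, to make this divisibility automatic.

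If this fails, I would fall back on the malnormal special quotient approach. Concretely:
\begin{itemize}
\item Cone off the malnormal family $\{\langle g_i^{\kappa N_i}\rangle\}$, in the cubical relatively hyperbolic/Dehn filling version used by Shepherd.
\item Check that the filled group is again virtually special, and that $g_i$ survives with order exactly $\kappa N_i$ in it.
\item Use residual finiteness of the filled group to separate the finitely many powers $g_i^j$, $j<\kappa N_i$, from the identity.
\end{itemize}
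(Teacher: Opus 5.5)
The paper does not prove this statement; it is imported from \cite{goodsam}, so your sketch has to stand on its own. Step~1 and the description of $\ord(q(g_i))$ as the lcm of the elevation degrees are fine. There is, however, a genuine gap exactly at the point you call the main obstacle. Building a finite cover in which \emph{every} elevation of every $C_i$ has degree dividing $\kappa N_i$, with lcm exactly $\kappa N_i$, for \emph{arbitrary} $(N_1,\dots,N_m)$ and a single $\kappa$, is the entire content of the theorem. Canonical completion gives no control over the degrees of the elevations it creates, for instance elevations of $C_j$ passing through the glued-in covers of $C_i$. Taking $\kappa$ to be an lcm of degrees fixed in Step~2 does not make the required divisibility automatic.

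Moreover, Step~2 rests on hyperbolic intuition that is false in this generality. Take $\Gamma=\bbZ^2$ acting on the square tiling of $\bbR^2$ (the quotient torus is special), with the convex, independent elements $g_1=(1,0)$ and $g_2=(0,1)$. Every horizontal line meets every vertical one, so in every finite cover every elevation of $C_1$ meets every elevation of $C_2$; no cover makes them disjoint. Also $\langle g_1\rangle\cap\Gamma'$ is normal in every finite-index $\Gamma'$, so no cover makes the cyclic family malnormal. Outside the hyperbolic setting independence is much weaker than malnormality, and the same failure occurs for parabolic elements of relatively hyperbolic groups.

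This also rules out the fallback. The malnormal special quotient theorem and its relatively hyperbolic Dehn-filling versions need $\Gamma$ to be (relatively) hyperbolic and the filled subgroups to form an almost malnormal quasiconvex family. That route gives at best the hyperbolic-element statements of Theorems~\ref{WiseOmnipotent} and~\ref{thm:omnirelative}. It does not give the statement for convex elements of an arbitrary virtually special cubulated group such as $\bbZ^2$, $F_2\times\bbZ$ or a right-angled Artin group.

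Your outline also never uses convexity beyond representing $g_i$ by a local isometry, yet convexity is indispensable. In $\bbZ^2$ the set $\{(1,0),(0,1),(1,1)\}$ is independent. In any finite quotient (necessarily abelian) the order of $q(1,1)$ divides the lcm of the orders of $q(1,0)$ and $q(0,1)$, so $(N_1,N_2,N_3)=(1,1,p)$ with $p$ prime is never realized. Here $(1,1)$ is not a convex element for this cubulation, since its convex hull is the whole plane. A proof therefore needs a genuinely cubical construction of finite covers that uses convexity to control all elevations simultaneously, and that is precisely the step your proposal leaves open.
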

For the proof of Theorem~\ref{mainthm1}, we will use the following theorem for cubulable relatively hyperbolic groups.
\begin{theorem}\label{thm:omnirelative}
    Let $(\Gamma,\mathcal{P})$ be a relatively hyperbolic virtually special group. For any independent subset $\{g_1,\dots,g_m\}$ of hyperbolic elements in $\Gamma$ and for any $m$-tuple of positive integers $(N_1,\dots,N_m)$, the group $\Gamma$ has a hyperbolic virtually special quotient $\overline{\Gamma}$ such that the kernel of $\Gamma\twoheadrightarrow\overline{\Gamma}$ contains $\langle\langle g_1^{\kappa N_1},\dots,g_m^{\kappa N_m}\rangle\rangle$ for some positive integer $\kappa$ (depending only on $g_1,\dots,g_m$) and the images of $g_1,\dots,g_n$ in $\overline{\Gamma}$ have order $\kappa N_1,\dots,\kappa N_n$.
\end{theorem}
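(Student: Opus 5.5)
We prove Theorem~\ref{thm:omnirelative} by relatively hyperbolic Dehn filling of the peripheral subgroups, which reduces to the word-hyperbolic case, followed by an application of Theorem~\ref{WiseOmnipotent}.

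\textbf{Step 1: kill the cusps.} Since $\Gamma$ is virtually special, it is residually finite, and so are the parabolic subgroups $P\in\mathcal{P}$. Each $g_i$ is hyperbolic, so it is not conjugate into any $P$. By the relatively hyperbolic Dehn filling theorem (Osin; Groves--Manning), there is a finite set $F\subset\Gamma\smallsetminus\{1\}$ such that for all finite-index normal subgroups $K_P\lhd P$ avoiding $F$, the quotient $\Gamma_1=\Gamma/\langle\langle \bigcup_P K_P\rangle\rangle$ has the following properties:
\begin{enumerate}
\item $\Gamma_1$ is hyperbolic relative to the finite groups $P/K_P$, hence word-hyperbolic;
\item the quotient map is injective on a prescribed finite ball of $\Gamma$.
\end{enumerate}
Moreover, the Groves--Manning version for virtually special (relatively) cubulated groups, as in the work of Agol--Groves--Manning and Einstein--Groves, guarantees that for sufficiently deep fillings $\Gamma_1$ is virtually special. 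We choose the ball large enough to contain the finitely many elements witnessing that the $g_i$ have infinite order and are independent. Then the images $\bar g_i$ are infinite-order elements of $\Gamma_1$. Independence survives because, for long fillings, loxodromic elements remain loxodromic, and commensurability of their conjugates is detected in a bounded ball via quasi-geodesic axes. Using Osin's description of loxodromics and their elementary subgroups under long fillings, we conclude that $\{\bar g_1,\dots,\bar g_m\}$ is independent in $\Gamma_1$.

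\textbf{Step 2: omnipotence in $\Gamma_1$.} By Theorem~\ref{WiseOmnipotent}, $\Gamma_1$ is omnipotent. In fact we need the stronger form used in Wise's proof, which comes from the Malnormal Special Quotient Theorem: there is a $\kappa$ such that for every $(N_i)$, the group $\overline{\Gamma}=\Gamma_1/\langle\langle \bar g_1^{\kappa N_1},\dots,\bar g_m^{\kappa N_m}\rangle\rangle$ is hyperbolic and virtually special, and $\bar g_i$ has order exactly $\kappa N_i$ in it. Concretely, one passes to a finite-index special subgroup in which the cyclic subgroups $\langle \bar g_i^{\kappa}\rangle$ and their conjugates form a malnormal family. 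The Malnormal Special Quotient Theorem then yields a hyperbolic virtually special quotient for all sufficiently divisible fillings, with each cyclic group $\langle \bar g_i^{\kappa}\rangle$ injecting modulo the filling kernel. Composing $\Gamma\to\Gamma_1\to\overline{\Gamma}$ gives the required quotient: its kernel contains $\langle\langle g_i^{\kappa N_i}\rangle\rangle$ and the orders of the images are $\kappa N_i$. Here $\kappa$ depends only on the $g_i$, once the filling in Step 1 has been fixed independently of the $N_i$.

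\textbf{Main obstacle.} The delicate point is Step 1: ensuring that the filled group is simultaneously virtually special and retains the independence of the $g_i$. Virtual specialness of hyperbolic Dehn fillings of virtually special relatively hyperbolic groups is a genuinely deep input, and I would cite it (Groves--Manning, \emph{Hyperbolic groups acting improperly}, together with Wise's quasiconvex hierarchy) rather than reprove it. For independence, my first approach would be to argue by contradiction. If $\bar g_i^a$ were conjugate to $\bar g_j^b$ for every long filling, then, since loxodromic elements have uniformly quasi-geodesic axes, the conjugator could be taken to have bounded length. Injectivity on a bounded ball would then lift the relation to $\Gamma$, contradicting the independence of the $g_i$ there.
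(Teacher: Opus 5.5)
Your two-stage plan is a different route from the paper's and is viable in outline: first fill the cusps to get a hyperbolic virtually special $\Gamma_1$, then run Wise's hyperbolic MSQT/omnipotence argument in $\Gamma_1$. However, the step you flag as delicate, independence of $\bar g_1,\dots,\bar g_m$ in $\Gamma_1$, is not established by what you wrote.

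Injectivity on a finite ball cannot see a relation $x\bar g_j^{\,b}x^{-1}=\bar g_i^{\,a}$ with unbounded exponents, and shortening the conjugator $x$ does not bound $a$ and $b$. You could try to bound them by pigeonholing along fellow-travelling axes. That would need hyperbolicity and quasi-geodesic constants for the $\bar g_i$ that are uniform over all long fillings, in a graph whose balls have uniformly bounded size. The word metric of a fixed generating set fails this, because the finite groups $P/K_P$ grow. The cusped or coned-off spaces, where hyperbolicity is uniform, do not have uniformly bounded balls.

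Your contradiction hypothesis (``for every long filling'') is also too strong. Negating the claim only yields arbitrarily long fillings, each carrying some relation, with $(a,b)$ depending on the filling. Likewise, infinite order of $\bar g_i$ is not witnessed by any finite set. It needs a theorem, not injectivity on a ball.

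The missing idea is the one the paper's proof begins with. Because the $g_i$ are independent hyperbolic elements, Osin's results show that $(\Gamma,\mathcal{P}\cup\{E_\Gamma(g_1),\dots,E_\Gamma(g_m)\})$ is again relatively hyperbolic. Choose the $K_P$ long for this enlarged structure as well, and fill each $E_\Gamma(g_i)$ by the trivial subgroup. The kernel is unchanged, so the quotient is your $\Gamma_1$. It is now hyperbolic relative to $\{P/K_P\}\cup\{E_\Gamma(g_i)\}$, with each $E_\Gamma(g_i)$ embedded. That gives infinite order of each $\bar g_i$. Almost malnormality of a peripheral family then rules out $x\bar g_j^{\,b}x^{-1}=\bar g_i^{\,a}$ for $i\neq j$. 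With this repair your argument goes through.

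The paper, having the enlarged structure anyway, does everything in a single filling. It fills $\mathcal{P}$ by deep finite-index subgroups and each $E_\Gamma(g_i)$ by $\langle g_i^{\kappa N_i}\rangle$. Virtual specialness comes from Einstein's filling theorem, and the orders $\kappa N_i$ come from injectivity of the filled peripheral quotients. Your version trades that for two separate inputs: virtual specialness of long cusp fillings, then Wise's hyperbolic MSQT. The cost is having to carry independence across the first filling, which is exactly where the enlargement is needed.
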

    \begin{proof}
    In the hyperbolic case, this is a consequence of the Malnormal Special Quotient Theorem \cite[Theorem~12.2]{WiseHaken} and the proof of Theorem~\ref{WiseOmnipotent}. We focus on the relatively hyperbolic virtually special case. Let $g_1,\dots,g_m$ be the set of independent hyperbolic elements in the theorem statement. A maximal elementary subgroup containing $g_i$ is a maximal two-ended subgroup of $\Gamma$ containing $g_i$. This maximal elementary subgroup is unique \cite[Lemma 5.5]{OsinRelHyp} because $g_i$ is hyperbolic and is denoted by $E_\Gamma(g_i)$. We first apply \cite[Lemma 5.5]{OsinRelHyp} inductively (using independence) to see that $(\Gamma,\mathcal{P}\cup_i E_{\Gamma}(g_i))$ is relatively hyperbolic. By performing a deep enough Dehn filling on the parabolic subgroups of $\Gamma$ (subgroups in $\mathcal{P}$ and $E_{\Gamma}(g_i))$, we can choose a hyperbolic virtually special quotient $\rho:\Gamma\twoheadrightarrow \Gamma'$ (by \cite[Theorem 2]{Einstein}) where the $\rho(g_i)$ elements have finite order $\kappa N_1,\dots,\kappa N_m$ for some $\kappa$ depending only on $g_1,\dots,g_m$.
    \end{proof}

The fundamental groups of finite volume hyperbolic $3$-manifolds are  cubulated and virtually special by the work of Agol and Wise \cite{AgolHaken, WiseHaken}. By Bergeron--Haglund--Wise this also applies to the groups of arithmetic hyperbolic $n$-manifolds of the simplest type \cite{BHW}.

\subsection{Simply transitive geodesics.}
Our application of omnipotence is based on a geometric property of closed geodesics of a hyperbolic manifold:

\begin{lemma}[see {\cite[Lemma~3.2]{ACM}}]\label{freeorbitlemma}
    Let $\calM$ be a finite-volume orientable hyperbolic $n$-manifold. There are infinitely many distinct closed geodesics in $\calM$ which are not fixed by any non-trivial isometry. 
\end{lemma}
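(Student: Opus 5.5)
We will show that the closed geodesics fixed setwise by some non-trivial isometry form a "thin" subset of all closed geodesics, so the rest must be infinite. Since $\calM$ has finite volume, Mostow--Prasad rigidity gives that $G=\Isom(\calM)$ is finite. Write $\calM=\calH^n/\Lambda$; then $G\cong N_H(\Lambda)/\Lambda$.

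\textbf{Step 1: each non-trivial isometry fixes only a controlled set of geodesics.} A non-trivial $g\in G$ that maps a closed geodesic $c$ to itself acts on $c\cong S^1$ either as a rotation or as a reflection. Pass to the finite-order isometry $h=g^{|G|!}$ or to a suitable power of $g$. A rotation of $c$ by a rational angle $p/q$ means $g^q$ fixes $c$ pointwise. A reflection means $g^2$ fixes $c$ pointwise, and $g$ itself has a fixed point on $c$.

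In either case some non-trivial element $g'\in\langle g\rangle$ (possibly $g'=g$) has a non-empty fixed set containing a point of $c$. A non-trivial isometry of a hyperbolic manifold has fixed set $\mathrm{Fix}(g')$ equal to a finite union of closed totally geodesic submanifolds of dimension at most $n-1$. Hence every such $c$ either lies in $\mathrm{Fix}(g')$ or meets it. The second case occurs for many geodesics, so this crude dichotomy alone is not enough.

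\textbf{Refined counting.} We lift to $\calH^n$ instead. A geodesic $c$ is preserved by $g$ exactly when some lift $\tilde g\in N_H(\Lambda)\setminus\Lambda$ preserves an axis $\tilde c$ of a hyperbolic element $\gamma\in\Lambda$. Then $\tilde g$ normalizes $\langle\gamma\rangle$ up to the stabilizer of the axis, and $\tilde g$ (or $\tilde g^2$) commutes with $\gamma$ up to finite order. The plan is to count closed geodesics of length at most $L$.

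\textbf{Step 2: the counting comparison.} By Margulis's prime geodesic theorem, the number of primitive closed geodesics of length $\le L$ is $\sim e^{(n-1)L}/((n-1)L)$. A geodesic preserved by $g$ projects to a closed geodesic, or to a geodesic arc with reflection endpoints, in the finite-volume orbifold $\calM/\langle g\rangle$. Its length is then $\ell(c)/2$ or $\ell(c)/q$. Alternatively, the geodesic lies in the normal bundle structure near $\mathrm{Fix}$.

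The cleanest version is this. Geodesics invariant under a rotation-type $g$ of order $q\ge2$ correspond to closed geodesics of length $\ell/q$ in the orbifold $\calM/\langle g\rangle$, and there are $O(e^{(n-1)L/2})$ of these. Geodesics on which $g$ acts by a reflection pass through $\mathrm{Fix}(g)$ twice, orthogonally to it in the appropriate sense. They correspond to orthogeodesic arcs from $\mathrm{Fix}(g)$ to itself of length $\le L/2$, and by orthogeodesic counting there are $O(e^{(n-1)L/2})$ of them. Summing over the finitely many $g\in G$, the preserved geodesics number $O(e^{(n-1)L/2})=o(e^{(n-1)L}/L)$. So infinitely many closed geodesics are preserved by no non-trivial isometry, and these are the "not fixed" ones.

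\textbf{Main obstacle.} The hard part is the reflection case: justifying the $O(e^{(n-1)L/2})$ bound for orthogeodesics in the non-compact case, including the cusps. I would handle it by lifting to $\calH^n$. A reflection-type element is an involution $\tilde g\in N_H(\Lambda)$ with $\tilde g\gamma\tilde g^{-1}=\gamma^{-1}$. This forces $\gamma=\tilde g\,(\tilde g\gamma)$, a product of two involutions in the finite-index extension $N_H(\Lambda)$. Counting such products of bounded translation length then reduces to counting pairs of orbit points within distance $L/2$, which is standard for lattices.
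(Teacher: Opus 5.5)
Your overall plan is sound: show that the closed geodesics of length $\le L$ preserved by some $g\neq 1$ number $o(e^{(n-1)L}/L)$, then compare with the prime geodesic theorem. This is the kind of counting argument the paper attributes to \cite{ACM}, and once completed it would even give density one rather than a positive proportion. The rotation case is fine provided $q$ is the order of the rotation induced on $c$, not the order of $g$. A suitable lift is then a hyperbolic element of $\langle\Lambda,\tilde g\rangle$ of translation length $\ell(c)/q\le L/2$. However, your final tally omits the case $q=1$, i.e.\ geodesics fixed \emph{pointwise} by some $g\neq 1$. These lie in $\mathrm{Fix}(g)$, a union of totally geodesic submanifolds of dimension $\le n-1$, so there are only $O(e^{(n-2)L})$ of them, but the case must be listed. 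There are also two smaller inaccuracies. First, your Step~1 claim that some non-trivial power of $g$ fixes a point of $c$ is false for a free rotation, though you do not use it. Second, a lift $\tilde g$ reversing the axis need not be an involution, since $\tilde g^2$ may be a non-trivial rotation about the axis. In that case, however, $c\subset\mathrm{Fix}(g^2)$ and you are back in the pointwise case.

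The genuine gap is the reflection case, which you rightly flag. Reducing to ``pairs of orbit points within distance $L/2$'' handles isolated fixed points, and compact $\mathrm{Fix}(g)$ via a finite net. But when a component of $\mathrm{Fix}(g)$ runs into a cusp, you must count common perpendiculars between non-compact lifts, and the bound $O(e^{(n-1)L/2})$ you assert is then false. Here is a counterexample. Take the reflection $z\mapsto-\bar z$ of the thrice-punctured sphere $\Gamma(2)\backslash\calH^2$; the image of the imaginary axis is a cusp-to-cusp geodesic in the mirror. Its lifts $\lambda(0,\infty)$, with $\lambda=\pm\left(\begin{smallmatrix} a&b\\ c&d\end{smallmatrix}\right)\in\Gamma(2)$ and $bc\neq0$, lie at distance $d_\lambda$ from $(0,\infty)$ with $\cosh d_\lambda=|1+2bc|$. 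Count the factorizations $bc=4m$, $ad=1+4m$ with $a,b,c,d>0$ and $1\le m\le(\cosh T-1)/8$. This gives $\gg Te^{T}$ orthogeodesics of length $\le T$ from this mirror geodesic to itself. Hence there are $\gg Le^{L/2}$ closed geodesics of length $\le L$ reversed by the reflection.

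Since you only need $o(e^{(n-1)L}/L)$, a polynomial loss is harmless, but it has to be proved. One route is to choose cusp neighbourhoods meeting $\mathrm{Fix}(g)$ only in vertical ends. Then the feet of a common perpendicular of length $\le T$ lie at depth $\le T+O(1)$. You would then bound, depth level by depth level, the number of non-asymptotic lifts within distance $T$ of a fixed lift. Naive orbit counting from points deep in a cusp does not work here, because it also picks up the many nearby asymptotic (vertical) lifts. As it stands, your argument proves the lemma only when every $\mathrm{Fix}(g)$ is compact, e.g.\ for closed $\calM$.
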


Here the geodesic $\gamma$ is said to be \emph{fixed} by an isometry $f$ if $f(\gamma) = \gamma$. Notice that it is not required that $\gamma$ is fixed by $f$ pointwise. 
The proof of the lemma in \cite{ACM} is based on counting of closed geodesics of bounded length and shows that a positive proportion of them has this property.

From the group theoretic viewpoint, a closed geodesic $\gamma$ in $\calM = \Gamma\backslash\calH^n$ corresponds to a primitive hyperbolic element $g\in \Gamma$. The geodesic $\gamma$ is fixed by an isometry of $\calM$ corresponding to an automorphism $\varphi\in\Aut(\Gamma)$ if $\varphi(g) = h g^k h^{-1}$ for some $k\in \mathbb{Z}\setminus\{0\}$ and $h\in \Gamma$. Thus, if $\phi(g)$ is not commensurable with $g$ in the sense of \cite[Definition~4.1]{osinminaysan}, then $\gamma$ is not fixed by $\varphi$.

\begin{remark}
Generalizing the previous result of Lubotzky for free groups \cite{Lu1}, Minasyan and Osin proved that an automorphism $\varphi:G\to G$ that represents a non-trivial element in $\Out(G)$ of a non-elementary relative hyperbolic group $G$ without finite normal subgroups
is not commensurating \cite[Corollary 1.4]{osinminaysan}. In our case it shows that for any non-trivial isometry of $\calM$ there is a closed geodesic that is not fixed. It may be possible to use the methods of \cite{osinminaysan} to show that there exists a closed geodesic that is not fixed by \emph{all} non-trivial isometries and, moreover, that there are \emph{infinitely many} such geodesics. This would give an alternative proof of Lemma~\ref{freeorbitlemma}, but we will not pursue it here. 
\end{remark}

\section{Normalizers of arithmetic lattices} \label{sec:normalizers}

\begin{proposition}\label{finitelymany}
Let $\Lambda$ be a lattice in $\PO(n,1)$. There are only finitely many maximal lattices $\Gamma_1,\dots,\Gamma_m$ with $\Lambda<\Gamma_i$ for $i=1,\dots,m$.  
\end{proposition}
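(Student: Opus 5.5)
The plan is to use the Kazhdan--Margulis theorem: there is a positive lower bound $\mu_n>0$ on the covolume of all lattices in $H=\PO(n,1)$. If $\Lambda<\Gamma$ with $\Gamma$ a lattice, then $[\Gamma:\Lambda]=\mathrm{vol}(\Lambda\backslash H)/\mathrm{vol}(\Gamma\backslash H)\le \mathrm{vol}(\Lambda\backslash H)/\mu_n$. So every lattice containing $\Lambda$ (maximal or not) contains it with index bounded by a constant $C=C(\Lambda)$. It then remains to show that a fixed lattice has only finitely many overgroups of index at most $C$.

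For this finiteness step, suppose $\Lambda<\Gamma$ with $[\Gamma:\Lambda]\le C$. Then $\Lambda$ contains a subgroup $\Lambda_0=\bigcap_{\gamma\in\Gamma}\gamma\Lambda\gamma^{-1}$ which is normal in $\Gamma$ with $[\Gamma:\Lambda_0]\le C!$. Since $\Lambda$ is finitely generated (a lattice in a rank-one group; alternatively, use that $\Lambda$ is finitely presented by Garland--Raghunathan for non-cocompact lattices), it has only finitely many subgroups of index at most $C!$. Thus $\Lambda_0$ ranges over finitely many subgroups $N_1,\dots,N_r$ of $\Lambda$, and $\Gamma\subset N_H(N_j)$ for some $j$. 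Each $N_j$ is a lattice, so by Mostow--Prasad rigidity (or simply the Borel density theorem, as the normalizer is discrete and contains a lattice) $N_H(N_j)$ is a lattice containing $N_j$ with finite index. Consequently $\Gamma$ is one of the finitely many subgroups of the finite group $N_H(N_j)/N_j$, pulled back. This gives finitely many lattices containing $\Lambda$, and in particular finitely many maximal ones.

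The main point to double-check is the discreteness of $N_H(N_j)$, which is needed for it to be a lattice. For $n\ge 3$, this follows from Mostow--Prasad rigidity. In general, including $n=2$, it follows from the standard fact that the normalizer of a Zariski-dense discrete subgroup of a simple group with trivial center is discrete: the identity component of the normalizer centralizes $N_j$, and hence by Borel density it is trivial. For $n=2$ one must also invoke Kazhdan--Margulis, or Siegel's bound for Fuchsian groups, for the covolume lower bound. Both are classical, so I expect no serious obstacle. The only subtlety is stating the index/covolume relation correctly for non-cocompact lattices, where it still holds with Haar measure.
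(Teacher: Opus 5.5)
Your argument is correct, and it takes a genuinely different route from the paper.

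\textbf{The paper's route.} The paper splits into cases. For non-arithmetic $\Lambda$ it uses Margulis's theorem that the commensurator is the unique maximal lattice. For arithmetic $\Lambda$ it uses Borel's volume formula to get finitely many overgroups up to conjugacy. It then bounds the number of conjugates $g_\alpha \Gamma g_\alpha^{-1}$ containing $\Lambda$, by observing that the elements $g_\beta^{-1}g_\alpha$ normalize a fixed finite-index subgroup $\Lambda'$ and appealing to Mostow--Prasad rigidity and finiteness of $\Out(\Lambda')$.

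\textbf{Your route.} You get a uniform index bound $[\Gamma:\Lambda]\le \mathrm{vol}(\Lambda\backslash H)/\mu_n$ from Kazhdan--Margulis and pass to the normal core. Finite generation of $\Lambda$ then leaves only finitely many candidate cores $N_j$. You conclude from finiteness of $N_H(N_j)/N_j$, which you derive from Borel density.

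\textbf{What your route buys.}
\begin{itemize}
\item It avoids the arithmetic/non-arithmetic dichotomy and the heavier inputs (Margulis's commensurator criterion, Borel's formula).
\item It gives an explicit bound in terms of covolume.
\item It proves directly the stronger statement that only finitely many lattices, maximal or not, contain $\Lambda$.
\item It applies verbatim to lattices in any center-free semisimple Lie group without compact factors.
\item It is more robust in dimension $2$. There Mostow--Prasad rigidity fails and $\Out(\Lambda')$ is infinite, so the paper's step really needs exactly your observation that $N_H(\Lambda')/\Lambda'$ is finite.
\end{itemize}
The paper's approach yields extra information only in the non-arithmetic case (a unique maximal overgroup), which the proposition does not need.

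A minor point: the Borel-density argument you sketch is the right justification for discreteness of $N_H(N_j)$ in every dimension. Citing Mostow--Prasad for it when $n\ge 3$ is unnecessary.
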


\begin{proof}
 When $\Lambda$ is non-arithmetic, then the commensurator of $\Lambda$ in $\PO(n,1)$ is the unique maximal lattice containing $\Lambda$ \cite[Theorem 1, p.~2]{margDisc}. When $\Lambda$ is arithmetic, by Borel's Volume Formula \cite{Borel} (see also \cite[Theorem 11.2.1]{MR} for example), there are at most finitely many lattices up to conjugation containing $\Lambda$. We can reduce each conjugacy class to a collection $\{g_\alpha Hg_\alpha^{-1}\}_{\alpha\in\lambda}$ of maximal lattices with $\Lambda< \bigcap_{\alpha\in\lambda} (g_\alpha Hg_\alpha^{-1})$. Each $g_\alpha$ gives a conjugation isomorphism $C_{g_\alpha}:H\to g_\alpha Hg_\alpha^{-1}$. Since $H$ is finitely generated as a lattice, the set $\{C_{g_\alpha}^{-1}(\Lambda)\}_{\alpha\in\lambda}$ is finite, so we can pass to a subset $\lambda'\subset\lambda$ of lattices where $\{C_{g_\alpha}^{-1}(\Lambda)\}_{\alpha\in\lambda'}$ is a single fixed lattice $\Lambda'<H$. For $\beta,\alpha\in\lambda'$, $C_{g_\beta}^{-1}\circ C_{g_\alpha}|_{\Lambda'}:\Lambda'\to\Lambda'$ is an automorphism. Since $\Out(\Lambda')$ is finite, by the Mostow--Prasad rigidity theorem \cite{Mostow}\cite{Prasad} the set of elements $\{g_\beta^{-1}g_\alpha\}_{\alpha,\beta\in\lambda'}\in \Aut(\Lambda')\cong N(\Lambda')$ fall into finitely many outer classes. Repeating the foregoing argument for $\Lambda''$, a finite-index subgroup of $\Lambda'$ with $N(\Lambda'')\cong H$, for all $\alpha,\beta\in\lambda'$ there are elements $n_{\alpha\beta}$ of $H$ such that $g_\alpha=g_\beta n_{\alpha\beta}$. It follows that $g_\alpha H g_\alpha^{-1}=g_\beta H g_\beta^{-1}$ and there are at most finitely many distinct conjugates of $H$ containing $\Lambda''$. So there are at most finitely many lattices containing $\Lambda$. 
\end{proof}

\begin{proposition}\label{base case} 
Let $\Lambda$ be a torsion-free arithmetic lattice in $\PO(3,1)$ or a torsion-free arithmetic lattice of the simplest type in $\PO(n,1)$.
For any finite-index subgroup $\Delta<\Lambda$ and any maximal lattice $\Gamma>\Lambda$ there is a lattice $\Theta<\Delta<\Lambda$ such that $\Aut(\Theta) \cap \Gamma =\Lambda$.  
\end{proposition}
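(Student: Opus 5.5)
We will construct $\Theta$ as the kernel of a suitable finite quotient of $\Lambda$, built via omnipotence applied to closed geodesics, and chosen so that the finitely many elements of $\Gamma$ outside $\Lambda$ cannot normalize $\Theta$. First we may replace $\Delta$ by its normal core in $\Lambda$. Any $\Theta$ that is characteristic-like, i.e.\ normal in $\Lambda$, automatically satisfies $\Lambda\subseteq\Aut(\Theta)$. So it suffices to arrange $\Theta\lhd\Lambda$, $\Theta<\Delta$, and that no $\gamma\in\Gamma\setminus\Lambda$ normalizes $\Theta$. Since $[\Gamma:\Lambda]<\infty$, it is enough to rule out finitely many cosets. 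A normalizing element of $\Theta$ lies in $\Gamma\cap N_H(\Theta)$, and this intersection is itself a finite-index subgroup of $\Gamma$ containing $\Lambda$. Hence we only need to show that for each intermediate group $\Lambda<\Gamma'\le\Gamma$ with $\Gamma'\ne\Lambda$ (finitely many) some element of $\Gamma'$ fails to normalize $\Theta$.

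\textbf{Choosing geodesics.} Apply Lemma~\ref{freeorbitlemma} to the manifold $\calM=\Lambda_0\backslash\calH^n$, where $\Lambda_0$ is a torsion-free normal subgroup of finite index in $\Gamma$ contained in $\Delta$ (it exists by Selberg's lemma and residual finiteness). Pick a primitive hyperbolic $g\in\Lambda_0$ whose geodesic is not fixed by any non-trivial isometry of $\Lambda_0\backslash\calH^n$. In particular, the $\Gamma$-conjugates $\{cgc^{-1}: c\in\Gamma/\Lambda_0\}$ pairwise give non-commensurable classes in $\Lambda_0$, and hence in $\Lambda$ as well, after splitting the $\Gamma$-orbit into $\Lambda$-orbits. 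Let $g_1=g$ and let $g_2,\dots,g_m$ be representatives of the $\Lambda$-conjugacy classes of $cgc^{-1}$ for $c\in\Gamma\setminus\Lambda$. These are independent in $\Lambda$. (We must check that commensurability is controlled inside $\Lambda$ rather than $\Lambda_0$; this uses that $\Lambda_0\lhd\Gamma$ and the freeness of the orbit.) We then use Theorem~\ref{WiseOmnipotent} in the cocompact case, and Theorem~\ref{thm:omnirelative} followed by residual finiteness of the hyperbolic virtually special quotient in the cusped case. The $g_i$ are loxodromic, and hence hyperbolic in the relative sense. These results give a finite quotient $q:\Lambda\to Q$ with $\ord q(g_1)=\kappa N_1$ and $\ord q(g_i)=\kappa N_i$ for $i\ge2$, where $N_1\ne N_i$, say $N_1=2$ and $N_i=1$. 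Intersecting the kernel with $\Delta$ still gives a normal subgroup of $\Lambda$. We also want the order of $g_1$ modulo $\Theta=\ker q\cap\Delta$ to stay distinct, so we take it to be a multiple of $[\Lambda:\Delta]!$ absorbed into $\kappa N_i$. This is achieved by choosing $N_1$ with a prime factor that does not divide the $\Delta$-degrees.

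\textbf{Conclusion.} Suppose $c\in\Gamma\setminus\Lambda$ normalizes $\Theta$. Then conjugation by $c$ induces an automorphism of $\Lambda_0\Theta/\Theta$-type data preserving orders modulo $\Theta$. Thus the $\Theta$-degree of $g$ equals the $\Theta$-degree of $cgc^{-1}$, which is $\Lambda$-conjugate to some $g_i$ with $i\ge2$. Since $\Theta$ is normal in $\Lambda$, the $\Theta$-degree is a $\Lambda$-conjugacy invariant. This gives $\ord(g_1)\ne\ord(g_i)$ modulo $\Theta$, a contradiction. Hence $\Aut(\Theta)\cap\Gamma=\Lambda$. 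The main obstacle is the independence step: guaranteeing that $g$ and its conjugates by elements of $\Gamma\setminus\Lambda$ are pairwise non-commensurable inside $\Lambda$, and not just non-conjugate. We would try to get this by choosing $g$ in the normal subgroup $\Lambda_0$ of $\Gamma$, so that commensurability in $\Lambda$ of $g$ with $cgc^{-1}$ yields an element $hc$ of $\Gamma$ fixing the geodesic of $g$ in $\Lambda_0\backslash\calH^n$. By the choice of $g$, $hc$ would act trivially there, which forces $c\in\Lambda$.
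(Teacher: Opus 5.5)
Your proof is correct. It uses the same ingredients as the paper: a geodesic with free orbit in a $\Gamma$-normal subgroup, omnipotence to assign two different orders, and the invariance of degrees under a normalizing conjugation. What differs is where omnipotence is applied, which changes the bookkeeping.

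\textbf{The paper's route.} It applies omnipotence inside $\Delta'\lhd\Gamma$, to one conjugate of $\gamma$ for each coset of $\Delta'$ in $\Gamma$. The kernel $\Theta$ it obtains need not be normal in $\Lambda$, so the paper passes to $\bigcap_i g_i\Theta g_i^{-1}$ over $g_i\in\Lambda$. It then needs an lcm argument to show the order pattern survives.

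\textbf{Your route.} You apply omnipotence in $\Lambda$ itself, to one representative of each $\Lambda$-conjugacy class of $cgc^{-1}$, that is, one per right coset $\Lambda c$ of $\Lambda$ in $\Gamma$. Then $\ker q$ is normal in $\Lambda$ for free. Intersecting with the normal core of $\Delta$ changes nothing, because every $g_i$ lies in $\Lambda_0\subseteq\Delta$. This removes the symmetrization step entirely.

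\textbf{The cost, and why it is met.} Independence must now hold up to $\Lambda$-conjugacy rather than $\Delta'$-conjugacy. Your argument for this works. Suppose $h(c_igc_i^{-1})^ah^{-1}=(c_jgc_j^{-1})^b$ with $h\in\Lambda$. Then $c_j^{-1}hc_i$ stabilizes the axis of $g$, so it lies in $\Lambda_0$ by the choice of $g$. This gives $c_i\in h^{-1}\Lambda_0c_j\subseteq\Lambda c_j$. You only wrote out the case $c_i=1$, but the case $c_i,c_j\notin\Lambda$ is identical.

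For the cusped case, your use of Theorem~\ref{thm:omnirelative} followed by residual finiteness of the hyperbolic virtually special quotient is fine. It replaces the paper's direct appeal to Theorem~\ref{sammy}.

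\textbf{Cleanups.}
\begin{enumerate}
\item Drop the passage about making orders multiples of $[\Lambda:\Delta]!$ and choosing a prime not dividing the $\Delta$-degrees. It is unnecessary: each $g_i\in\Delta$ has $\Delta$-degree $1$, so its $\Theta$-degree is exactly $\ord q(g_i)=\kappa N_i$.
\item Replace the sentence about ``$\Lambda_0\Theta/\Theta$-type data'' with the clean statement you give next. If $c$ normalizes $\Theta$, then $g^m\in\Theta$ if and only if $cg^mc^{-1}\in\Theta$, and $\Theta\lhd\Lambda$ makes the $\Theta$-degree a $\Lambda$-conjugacy invariant.
\item The reduction to intermediate groups $\Lambda<\Gamma'\le\Gamma$ is never used, since your conclusion handles every $c\in\Gamma\smallsetminus\Lambda$ directly. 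You can delete it.
\end{enumerate}
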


\begin{proof}
For a finite index subgroup $\Delta<\Lambda$, we choose $\Delta'<\Delta<\Lambda$ a finite-index subgroup with $\Aut(\Delta')=\Gamma$. We also choose a hyperbolic element $\gamma$ whose conjugacy class corresponds to a geodesic in $\Delta'\backslash\calH^n$ on which the group $\Gamma/\Delta'\cong \Out(\Delta')$ acts simply transitively by Lemma~\ref{freeorbitlemma}. 

For $\{g_1,\dots,g_k\} = \Lambda/\Delta'$ and $\{g_{k+1},\dots,g_l\} = (\Gamma/\Delta')\smallsetminus (\Lambda/\Delta')$, we choose preimages $g_i'\in \Gamma$ for $1\leq i\leq l$ with $g_1'=1\in\Gamma$. We consider the non-conjugate (in $\Delta'$) cyclic subgroups $\langle g_i'\gamma g_i'^{-1} \rangle$ for $1\leq i\leq k$ and $\langle g_j'\gamma g_j'^{-1} \rangle$ for $k+1\leq j\leq l$. By construction, these subgroups intersect all cusp subgroups of $\Delta'$ trivially. They are therefore convex subgroups of $\Delta'$ as defined in \cite{goodsam} (see Definition~2.19). Since $\Delta'$ is cubulated and virtually special \cite{AgolHaken, BHW, WiseHaken}, by Theorem~\ref{sammy}, $\Delta'$ is omnipotent and so there is an integer $\kappa$ and a finite quotient $\rho:\Delta'\rightarrow Q$ such that $\ord(\rho(g_i'\gamma g_i'^{-1}))=\kappa N_1$ for $1\leq i\leq k$, $\ord(\rho(g_j'\gamma g_j'^{-1}))=\kappa N_2$ for $k+1\leq j\leq l$, and $N_1\ne N_2$. 

Set $\Theta=\ker(\rho)$. For $f\in \Gamma\smallsetminus\Lambda$, if $f \Theta f^{-1}=\Theta$, then $f$ induces an automorphism $$\Tilde{f}:\Delta'/\Theta\to\Delta'/\Theta,$$  
which sends the image $\tilde{g_i'}$ of a representative of a 
conjugacy class representing $\{g_i'\gamma g_i'^{-1}\}$ with $1\leq i\leq k$ to the image $\tilde{g_j'}$ of a representative of a conjugacy class representing $\{g_j'\gamma g_j'^{-1}\}$ with $k+1\leq j\leq l$. To see this, notice that $f=g_t' h$ for some $h\in \Delta'$ and $1 \le t \le l$ .
Since $f\in \Gamma\smallsetminus\Lambda$, $t\geq k+1$. 
Now the conjugation by $f$ that fixes $\Theta$ induces a map $\tilde{f}$ that sends $\tilde{g_1'}$ to $\tilde{g_t'}$, and by the construction the orders of $\tilde{g_1'}$ and $\tilde{g_t'}$ in $\Delta'/\Theta$ are distinct in yielding a contradiction.  
 
Thus, $\Delta'<\Aut(\Theta)<\Lambda$. If $\Theta$ is $\Lambda$-invariant, then $\Aut(\Theta)=\Lambda$, and we are done. Otherwise, consider the $\Lambda$-invariant subgroup $\Theta'=\bigcap_{i=1}^k g_i\Theta g_i^{-1}$. 

 By construction, $\Delta'/\Theta'\cong \prod_{i\in S}\Delta'/ g_i\Theta g_i^{-1}$, where $S\subset \{1, \ldots, k\}$ is a non-empty subset. We can check that the homomorphism $\rho':\Delta'\to \Delta'/\Theta'$ also satisfies $\ord(\rho'(g_i\gamma g_i^{-1}))=\kappa N_1$ for $1\leq i\leq k$ and $\ord(\rho'(g_j\gamma g_j^{-1}))=\kappa N_2$ for $k+1\leq j\leq l$. To see that this is true, let $1\leq i,i'\leq k$ and let $k+1\leq j\leq l$. The order of the image of $g_{i'}\gamma g_{i'}^{-1}$ in $\Delta'/g_i\Theta g_i^{-1}$ is the same as the order of the image of $g_i^{-1}(g_{i'}\gamma g_{i'}^{-1})g_i$ in $\Delta'/\Theta$ which is $\kappa N_1$ by the previous paragraph. Thus, $\ord(\rho'(g_{i'}\gamma g_{i'}^{-1}))$ is the least common multiple of the orders of the images of $g_{i'}\gamma g_{i'}^{-1}$ in each factor of $\prod_{i\in S}\Delta'/ g_i\Theta g_i^{-1}$, and so $\ord(\rho'(g_{i'}\gamma g_{i'}^{-1}))=\kappa N_1$. The order of the image of $g_{j}\gamma g_{j}^{-1}$ in $\Delta'/g_i\Theta g_i^{-1}$ is the same as the order of the image of $g_i^{-1}(g_{j}\gamma g_{j}^{-1})g_i$ in $\Delta'/\Theta$ which is $\kappa N_2$ by the previous paragraph. So, $\ord(\rho'(g_{j}\gamma g_{j}^{-1}))$ is the least common multiple of the orders of the images of $g_{j}\gamma g_{j}^{-1}$ in each factor of $\prod_{i\in S}\Delta'/ g_i\Theta g_i^{-1}$, and so $\ord(\rho'(g_{j}\gamma g_{j}^{-1}))=\kappa N_2$ as claimed. Thus, $\Theta'$ is a $\Lambda$-invariant finite-index subgroup of $\Delta'$ with no other automorphisms in $\Gamma$ outside $\Lambda$ and $\Aut(\Theta')\cap \Gamma =\Lambda$ as claimed. 
\end{proof}

We can now prove the first main result.

\begin{theorem}\label{mainthm1}
Let $\Lambda$ be a torsion-free arithmetic lattice in $\PO(3,1)$ or a torsion-free arithmetic lattice of the simplest type in $\PO(n,1)$.
For a finite-index subgroup $\Delta<\Lambda$, there is a finite-index subgroup $\Theta<\Delta$ such that $\Aut(\Theta)=\Lambda$. 
\end{theorem}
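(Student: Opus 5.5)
We combine Proposition~\ref{finitelymany} with Proposition~\ref{base case}, applied once for each maximal lattice containing $\Lambda$, and then intersect the resulting subgroups.

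\textbf{Step 1: reduce to finitely many ambient lattices.} Let $\Gamma_1,\dots,\Gamma_m$ be the maximal lattices containing $\Lambda$; by Proposition~\ref{finitelymany} there are only finitely many. Suppose $\Theta<\Delta$ has finite index. Then $\Aut(\Theta)=N_H(\Theta)$ is a lattice containing $\Lambda$, provided $\Theta$ is normalized by $\Lambda$. It is contained in some maximal lattice. For an arithmetic lattice this needs a remark: $N_H(\Theta)$ lies in the commensurator of $\Lambda$, and every lattice is contained in a maximal one, by Borel's volume bound. So $N_H(\Theta)\subset\Gamma_i$ for some $i$. Hence it suffices to produce a $\Lambda$-invariant $\Theta$ with $N_H(\Theta)\cap\Gamma_i=\Lambda$ for every $i$ simultaneously.

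\textbf{Step 2: inductive construction.} Apply Proposition~\ref{base case} to $\Delta$ and $\Gamma_1$. This gives a $\Lambda$-invariant $\Theta_1<\Delta$ with $\Aut(\Theta_1)\cap\Gamma_1=\Lambda$; the proof constructs $\Theta'$ to be $\Lambda$-invariant, so $\Theta_1$ is normal in $\Lambda$. Next apply the proposition to $\Theta_1$ and $\Gamma_2$ to get $\Theta_2<\Theta_1$, and continue up to $\Theta_m$. Each step preserves the conclusions already obtained, but not automatically. The danger is that a finite-index subgroup of $\Theta_1$ can have a larger normalizer than $\Theta_1$ does, so the property for $\Gamma_1$ may be lost at later steps.

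To handle this, I would modify the construction so that all $m$ conditions are imposed at once in a single omnipotence quotient. Take $\Delta'<\Delta$ normal in $\Lambda$ and of finite index in each $\Gamma_i$; this is possible since each $\Gamma_i\supset\Lambda$ has finite index, and we can intersect cores of normal subgroups. Choose, by Lemma~\ref{freeorbitlemma}, a geodesic $\gamma$ with trivial stabilizer under the finite group $\langle \Gamma_1,\dots,\Gamma_m\rangle$-image. That group need not be finite, so instead I would choose $\gamma$ generic for each $\Gamma_i/\Delta'_i$ separately. Then apply Theorem~\ref{sammy} (or Theorem~\ref{thm:omnirelative}) to the independent family of all translates $g\gamma g^{-1}$, for $g$ running over coset representatives of $\Delta'$ in $\bigcup_i\Gamma_i$. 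Assign order $\kappa N_1$ to the $\Lambda$-translates and distinct orders $\kappa N_2$ to all the others. Finally, intersect over $\Lambda$-conjugates as in the proof of Proposition~\ref{base case}, so that $\Theta$ is $\Lambda$-invariant.

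\textbf{Step 3: conclude.} Any $f\in N_H(\Theta)$ lies in some $\Gamma_i$. If $f\notin\Lambda$, conjugation by $f$ would map the image of $\gamma$, of order $\kappa N_1$, to the image of a non-$\Lambda$ translate, of order $\kappa N_2$, which is a contradiction. Hence $\Aut(\Theta)=\Lambda$; the same argument works for $\Aut^+$.

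The main obstacle is Step 2: ensuring that the translates in different $\Gamma_i$ are pairwise independent in $\Delta'$, with no powers conjugate. This requires $\gamma$ to be chosen so that no element of $\bigcup_i\Gamma_i\smallsetminus\Lambda$ carries $\gamma$ to a $\Delta'$-conjugate of a $\Lambda$-translate. I would first try a counting version of Lemma~\ref{freeorbitlemma}: a positive proportion of geodesics have trivial stabilizer in each of the finitely many covers $\Gamma_i$, so some geodesic is good for all $i$ at once.
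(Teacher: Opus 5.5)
\textbf{Gap: choosing one $\gamma$ for all $\Gamma_j$ at once.} Your one-shot scheme (a single $\gamma$ and a single omnipotence quotient handling all $\Gamma_1,\dots,\Gamma_m$ at once) is a reasonable alternative to the paper's induction on $m$. But it stalls exactly where you say, and your proposed fix does not work as stated.

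Independence is not really the issue once duplicates are merged. Two translates $g\gamma g^{-1}$ and $g'\gamma g'^{-1}$ by commensurator elements have the same translation length, and axis stabilizers in the torsion-free group $\Delta'$ are infinite cyclic. So the two translates are either independent or $\Delta'$-conjugate up to inversion. Coincidences among non-$\Lambda$ translates are harmless.

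The real requirement is that no $\Lambda$-translate coincides with a non-$\Lambda$ one. That is, for \emph{every} $j$ there must be no $h\in\Gamma_j\smallsetminus\Lambda$ with $h\gamma h^{-1}=\gamma^{\pm1}$. This is a simultaneous genericity condition for $m$ different finite covers. Lemma~\ref{freeorbitlemma} only gives, for each cover separately, infinitely many (or a positive proportion of) good geodesics. The step ``a positive proportion is good for each $i$, hence some geodesic is good for all'' is a non sequitur. Here $m$ subsets of positive proportion need not meet, and they are not even subsets of the same set, since they live in different manifolds $\Delta'_i\backslash\calH^n$.

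To close this you would need a density-one statement in a common cover $(\bigcap_i\Delta'_i)\backslash\calH^n$. Plausibly, geodesics invariant under a nontrivial deck isometry are exponentially rarer: they descend to geodesics of at most half the length in the quotient orbifold, lie in a lower-dimensional fixed set, or double an arc between fixed sets. That argument goes beyond the lemma as stated.

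The paper avoids the issue entirely. It uses a separate element for each lattice: $\gamma_j$ generic only for $\Gamma_j$, and $\alpha$ generic only for $\Gamma_{k+1}$, chosen not conjugate into each other's cyclic subgroups. It then rules out $\Gamma_j\smallsetminus\Lambda$ using only the translates of $\gamma_j$. You could import that device into your single quotient instead of insisting on one $\gamma$.

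\textbf{Second repair: Step 3 needs degrees, not induced automorphisms.} No finite-index $\Delta'<\Lambda$ is normal in two distinct maximal lattices, since its normalizer would be a lattice containing both. So in Step 3, an $f\in\Gamma_i\smallsetminus\Lambda$ normalizing $\Theta$ need not induce an automorphism of $\Delta'/\Theta$, and $f\gamma f^{-1}$ need not even lie in $\Delta'$. For the same reason, indexing the family by cosets of $\Delta'$ in $\Gamma_i$ is not enough. Instead:
\begin{itemize}
\item let $\Delta'_i$ be the $\Gamma_i$-core of $\Delta'$ and take $\gamma\in\bigcap_i\Delta'_i$;
\item index the translates by $\Gamma_i/\Delta'_i$, so every $f\gamma f^{-1}$ with $f\in\Gamma_i$ is $\Delta'$-conjugate to a listed one;
\item replace ``orders in $\Delta'/\Theta$'' by $\Theta$-degrees, which are preserved by anything normalizing $\Theta$.
\end{itemize}
This degree argument is exactly how the paper's last paragraph excludes the earlier $\Gamma_j$. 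With these changes, the intersection over $\Lambda$-conjugates still preserves the two orders, because $\lambda^{-1}g\in\Gamma_i\smallsetminus\Lambda$ whenever $g$ is.
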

\begin{proof}
By Proposition~\ref{finitelymany}, there are finitely many maximal lattices $\Gamma_1, \dots, \Gamma_m$ containing $\Lambda$. We will proceed by induction in the number $m$ of these lattices. Using Proposition~\ref{base case}, we can find $\Theta_1<\Delta<\Lambda$ such that $\Aut(\Theta_1)\cap \Gamma_1=\Lambda$. This is the base case for the induction. Assuming we have a lattice $\Theta_k<\Delta<\Gamma_{k}$ for $k\in \{1,\dots,m\}$, such that $\Aut(\Theta_k)\cap\Gamma_i=\Lambda$ for $i=1,\dots,k$ and $k < m$, we want to show that we can find a lattice $\Theta_{k+1}$ such that $\Aut(\Theta_{k+1})\cap\Gamma_i=\Lambda$ for $i=1,\dots,k+1$: 
$$
\xymatrix@C=2pt@R=2.8ex{
*+[c]{\Gamma_1} \ar@{-}[ddr] & \ldots & {\Gamma_k} \ar@{-}[ddl]\\
& & \\
                                &*+[r]{\Lambda = N_{\Gamma_i}(\Theta_k)}\ar@{-}[d] & \\
                                &{\Delta}\ar@{-}[d] & \\
                                &{\ \Theta_k} & \\
}
$$

Fix a $\Lambda$-invariant subgroup $\Theta_k<\Delta$ as above.
For each $j=1,\dots,k$ by Lemma~\ref{freeorbitlemma} we can choose a hyperbolic element $\gamma_j\in \Lambda$  such that for all elements $g_{lj}\in \Gamma_j\smallsetminus\Lambda$ in a left transversal for $\Lambda$ in $\Gamma_j$, $g_{lj}\gamma_jg_{lj}^{-1}$ is not conjugate to $\gamma_j$ in $\Lambda$. At this step, we observe that $g_{lj}$ normalizes $\Lambda$ if and only if $g_{lj}\gamma_jg_{lj}^{-1} \in \Lambda$. We can then find a common power $N_j$ such that for all $g_{lj}\in \Gamma_j$ in the left transversal for $\Lambda$ in $\Gamma_j$, $(g_{lj}\gamma_jg_{lj}^{-1})^{N_j}\in\Theta_k$.
  
We then choose a $\Gamma_{k+1}$-invariant subgroup of $\Theta_k$ which we denote by $\Delta_{k+1}$, and we choose a common power $N$ such that 
\begin{equation}\label{eq-spade}
 \langle g_{11}\gamma_1^{NN_1}g_{11}^{-1}\rangle,\dots,\langle g_{lk}\gamma_k^{NN_k}g_{lk}^{-1}\rangle 
\end{equation}
is a family of malnormal cyclic subgroups of $\Delta_{k+1}$. Now apply Lemma~\ref{freeorbitlemma} to choose a hyperbolic element $\alpha\in \Delta_{k+1}$ such that for every element $h\in \Gamma_{k+1}\smallsetminus \Delta_{k+1}$, $h\alpha h^{-1}$ is not conjugate to $\alpha$ in $\Delta_{k+1}$. It is also important that $\alpha$ is not conjugate into the subgroups $\langle \gamma_1^N\rangle,\dots,\langle \gamma_k^N\rangle$ as well as the finitely many distinct conjugates specified in~\eqref{eq-spade}. By choosing $h_1,\dots,h_t$, $h_{t+1},\dots,h_l\in \Gamma_{k+1}$ to represent all the distinct cosets of $\Gamma_{k+1}/\Delta_{k+1}$ (a left transversal including $1$ of $\Delta_{k+1}$ in $\Gamma_{k+1}$) with $h_1,\dots,h_t$ representing the cosets of $\Lambda/\Delta_{k+1}$, we obtain a new, larger malnormal family of cyclic subgroups $$\langle h_1\alpha h_1^{-1}\rangle,\dots,\langle h_t\alpha h_t^{-1}\rangle,\langle g_{11}\gamma_1^{NN_1}g_{11}^{-1}\rangle,\dots,\langle g_{lk}\gamma_j^{NN_k}g_{lk}^{-1}\rangle$$ in $\Delta_{k+1}$. 

By the Malnormal Special Quotient Theorem/Omnipotence (see Theorem~\ref{sammy}) we can choose distinct positive integers $M_1,M_2$ which distinguish conjugates coming from $\Lambda$ from everything else. By Theorem~\ref{thm:omnirelative}, we produce a word hyperbolic and virtual special quotient
$\overline{\Delta_{k+1}}\cong \Delta_{k+1}/G$
where
\begin{align*}
    G \supset &\langle\langle h_1\alpha^{M_1} h_1^{-1},\dots, h_t\alpha^{M_1}h_t^{-1},h_{t+1}\alpha^{M_2}h_{t+1}^{-1},\dots,h_{l}\alpha^{M_2}h_{l}^{-1}, \\
    & g_{11}\gamma_1^{M_1NN_1}g_{11}^{-1},\dots,g_{t1}\gamma_1^{M_1NN_1}g_{t1}^{-1},g_{(m+1)1}\gamma_1^{M_2NN_1}g_{(m+1)1}^{-1},\\
    &\dots, g_{l1}\gamma_1^{M_1NN_1}g_{l1}^{-1}\\
    &\dots,\\
    & g_{1j}\gamma_j^{M_1NN_j}g_{1j}^{-1},\dots,g_{tj}\gamma_j^{M_1NN_j}g_{tj}^{-1},g_{(m+1)j}\gamma_j^{M_2NN_j}g_{(m+1)j}^{-1},\\
    &\dots, g_{lj}\gamma_j^{M_2NN_j}g_{lj}^{-1}
    \rangle \rangle.
\end{align*}
We then fix a finite-index $\overline{\Theta_{k+1}}<\overline{\Delta_{k+1}}$ which is torsion-free and $\overline{\Lambda}$-invariant. Its preimage in $\Delta_{k+1}$ denoted by $\Theta_{k+1}$ will be $\Lambda$-invariant and for any $g\in\Gamma_{k+1}\smallsetminus \Lambda$, $g(\Theta_{k+1})g^{-1}\ne\Theta_{k+1}$ because otherwise we get a contradiction as conjugation by $g$ induces an isomorphism $\Delta_{k+1}/\Theta_{k+1}\to\Delta_{k+1}/\Theta_{k+1}$ which sends elements of a certain order to elements of a different order (consider e.g. the image of $h_1\alpha h_1^{-1}$). 

For all the other elements $g_{lj}\in \Gamma_j\smallsetminus \Lambda$ with $j< k+1$, we can use $\Theta_j>\Theta_{k+1}$, the $\Gamma_j$-invariant subgroup 
to rule them out as follows: if $g_{lj}\Theta_{k+1}g_{lj}^{-1}=\Theta_{k+1}$, then $g_{lj}$ will switch elements of distinct $\Theta_{k+1}$-degrees where the $\Theta_{k+1}$-degree of an element is the minimal power of an element contained in $\Theta_{k+1}$. In particular, conjugating by $g_{lj}$ will take $g_{1j}\gamma_j^{NN_j}g_{1j}^{-1}$, an element with $\Theta_{k+1}$-degree $M_1$, to an element with $\Theta_{k+1}$-degree $M_2\neq M_1$.

This shows that $\Theta_{k+1}$ has $\Aut(\Theta_{k+1})\cap\Gamma_i = \Lambda$ for $i=1,\dots,k+1$ and finishes the induction step.
\end{proof}

We finish this section with the two corollaries stated in the introduction. The proof of Corollary~\ref{cor1} is immediate: By Theorem~\ref{mainthm1} and \cite[Theorems~4.1 and 4.2]{ACM} every lattice in $\PSL(2,\bbC)$ is the normalizer of infinitely many of its sublattices and it remains to apply the Bridson--Reid normalizer inheritance theorem \cite[Theorem~4.4]{BRPrasad}. The proof of Corollary~\ref{cor2} is entirely similar to the proof of Corollary~4.7 in \cite{ACM}. It is based on the virtual fibering property proved by Agol \cite{AgolHaken} and the results of Bridson--Reid \cite{BRPrasad}. We emphasize that combining the results of \cite{ACM} and this paper allows us to remove any arithmeticity related assumptions in these corollaries.

\section{Isometry groups of arithmetic hyperbolic \texorpdfstring{$n$}{n}-mani\-folds}
\label{sec:isometries}

The purpose of this section is to show how to extend the result of  \cite{BelLubotzkyIsom} to arithmetic hyperbolic $n$-manifolds. We prove that for every $n\geq 2$, every finite group is realized as the full isometry group of infinitely many arithmetic hyperbolic $n$-manifolds, either compact or non-compact. 

Similarly to Section~\ref{sec:normalizers}, the proof is based on the fact that there are only finitely many maximal lattices that have to be considered in the argument. 
A close look at the proof of the main results in \cite{ACM} and \cite{BelLubotzkyIsom} shows certain similarities of the two arguments. In both cases one needs to kill the extra symmetries by carefully choosing the set elements or subgroups that can not be preserved by them. In \cite{ACM} and Section~\ref{sec:normalizers} above this is done by using the geometric Malnormal Special Quotient Theorem, while in \cite{BelLubotzkyIsom} the corresponding tool comes from the algebraic \mbox{pro-$p$} properties of free groups \cite{Lu1}. In order to extend the proof of \cite{BelLubotzkyIsom} to the arithmetic case we need to modify this choice of the subgroups to take into account the finite collection of the maximal lattices that are now involved in the construction. 

We briefly recall the main steps of the proof and elaborate on the required extra steps. Although the main idea is simple, some of these extra steps are not obvious and require good care. The goal of this section is to extend the counting method of \cite{BelLubotzkyIsom} for proving the following theorem.

\begin{theorem}\label{mainthm2}
For every $n\ge 2$ and every finite group $G$ there exist infinitely many arithmetic hyperbolic $n$-manifolds $\calM$ of the simplest type with $\Isom(\calM) \cong G$.
\end{theorem}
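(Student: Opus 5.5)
We follow the counting strategy of \cite{BelLubotzkyIsom}, adapted to the arithmetic setting through Proposition~\ref{finitelymany}. Fix $n\ge 2$ and a finite group $G$, generated by $d$ elements. Choose a torsion-free arithmetic lattice $\Lambda_0$ of the simplest type in $\PO(n,1)$ that surjects onto a free group $F_r$ with $r\ge d+2$. Such lattices exist for every $n\ge 2$: simplest type lattices are virtually special by \cite{BHW}, hence large, and passing to a finite-index subgroup gives arbitrarily large rank. For $n=2$ any torsion-free arithmetic Fuchsian group works. Let $\Gamma_1,\dots,\Gamma_m$ be the finitely many maximal lattices containing $\Lambda_0$, given by Proposition~\ref{finitelymany}.

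\textbf{Step 1.} Apply Theorem~\ref{mainthm1}, or Proposition~\ref{base case} iterated over $\Gamma_1,\dots,\Gamma_m$ as in its proof, to pass to a finite-index subgroup $\Lambda<\Lambda_0$ with $N_H(\Lambda)=\Lambda$. In every maximal lattice $\Gamma_i$, any element normalizing a finite-index subgroup of $\Lambda$ that is characteristic-type enough to be $\Lambda$-invariant then lies in $\Lambda$. Since every lattice containing a sublattice $\Lambda'<\Lambda$ lies in some maximal lattice containing $\Lambda'$, we need to control all the maximal lattices containing the sublattices we build. Proposition~\ref{finitelymany} applied to $\Lambda$ keeps this a finite list; the issue is that sublattices may lie in new maximal lattices. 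To avoid this, we only use sublattices $\Theta$ with $\Lambda<N_H(\Theta)$ constructed as below.

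\textbf{Step 2: counting.} Consider normal subgroups $\Theta\lhd\Lambda$ with $\Lambda/\Theta\cong G$ obtained by pulling back epimorphisms $\Lambda\to F_r\to G$. Then $\Isom(\Theta\backslash\calH^n)=N_H(\Theta)/\Theta$ contains $G$, and equality holds iff $N_H(\Theta)=\Lambda$. Any $g\in N_H(\Theta)$ commensurates $\Lambda$, and $\langle\Lambda,g\rangle$ normalizes $\Theta$. Hence $\langle \Lambda, g\rangle$ is a lattice containing $\Lambda$ and lies in one of the finitely many maximal lattices $\Gamma_1,\dots,\Gamma_m$ over $\Lambda$. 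So $N_H(\Theta)\subset\bigcup_i \Gamma_i$, and it suffices to show that for most epimorphisms $\varphi:\Lambda\to G$ (through $F_r$), no element of $\Gamma_i\smallsetminus\Lambda$ normalizes $\ker\varphi$.

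For a fixed $g\in\Gamma_i\smallsetminus\Lambda$, set $\Lambda_g=\Lambda\cap g^{-1}\Lambda g$, of bounded index. Normalization forces $\varphi\circ c_g$ and $\varphi$ to have equal kernels on $\Lambda_g$. Following \cite{BelLubotzkyIsom} and Lubotzky's pro-$p$ argument \cite{Lu1}, the number of pairs of epimorphisms from $F_r$ to $G$ satisfying such a nontrivial coincidence is $o(|G|^{r})$, while the total is $\asymp|G|^r$. This uses that $c_g$ is not inner on any finite-index subgroup, by Step 1 and Mostow--Prasad rigidity. Since only finitely many cosets $g\Lambda$ in the $\Gamma_i$ occur, a union bound leaves a positive proportion of good $\varphi$. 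Letting $r$ grow, or varying $\Lambda$, gives infinitely many non-isometric $\calM$.

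\textbf{Main obstacle.} The hard step is the counting estimate for non-normalizing $g$. In \cite{BelLubotzkyIsom}, $N(\Lambda)=\Lambda$ for non-arithmetic $\Lambda$ trivializes this. Here $g$ only induces an isomorphism between finite-index subgroups $\Lambda_g\to g\Lambda_g g^{-1}$, which must be transported to the free quotient. I would first try to arrange, via Theorem~\ref{thm:omnirelative} as in Theorem~\ref{mainthm1}, that the free quotient map $\Lambda\to F_r$ restricted to $\Lambda_g$ is incompatible with $c_g$. Concretely, choose independent hyperbolic elements as in Lemma~\ref{freeorbitlemma} whose images have different orders, so that the relevant coincidence becomes a proper algebraic condition on $\varphi$. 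That condition is counted as negligible.
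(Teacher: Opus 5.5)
Your reduction in Step 2 is correct. Because $\Theta\lhd\Lambda$, the lattice $N_H(\Theta)$ contains $\Lambda$, so only the finitely many cosets $g\Lambda\subset\Gamma_i\smallsetminus\Lambda$, with $\Gamma_i\supset\Lambda$ maximal, need to be excluded. But the step you call the main obstacle is the entire content of the proof, and the ingredients you offer for it do not work.

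Step 1 is not available. Theorem~\ref{mainthm1} and Proposition~\ref{base case} produce a sublattice normal in the \emph{given} lattice whose normalizer is that lattice. This says nothing about $N_H(\Lambda)$ itself, and a torsion-free lattice with $N_H(\Lambda)=\Lambda$ is exactly the case $G=1$ of the theorem you are proving.

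The counting estimate also has no content as stated. For fixed $\Lambda$, both $r$ and $|G|$ are fixed, so there are finitely many $\varphi$ and ``$o(|G|^r)$'' has no asymptotic parameter. Increasing $r$ means changing $\Lambda$, and with it the maximal lattices and the bad cosets, with no uniformity.

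The appeal to Lubotzky's theorem also fails. An element $g\in\Gamma_i\smallsetminus\Lambda$ need not normalize $M=\ker(\Lambda\to F_r)$, so $c_g$ induces nothing on $F_r$ and the theorem has nothing to act on. Even when $g$ does induce an automorphism, the principle you quote is false. The non-inner automorphism $x_1\mapsto x_1x_2$, $x_i\mapsto x_i$ ($i\ge2$) preserves $\ker\varphi$ whenever $\varphi(x_2)=1$, which is a proportion near $1/|G|$ however large $r$ is. An element normalizing $\Lambda$ and $M$ and centralizing $\Lambda/M$ (the analogue of the paper's $C_i$) preserves \emph{every} such kernel.

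Your fallback through Theorem~\ref{thm:omnirelative} does not fit either. Omnipotence prescribes orders of chosen elements in some finite quotient but gives no control over the quotient group, so it cannot be combined with $\Lambda/\Theta\cong G$. This is why the paper proves Theorem~\ref{mainthm2} by a separate counting argument rather than from Theorem~\ref{mainthm1}.

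The missing idea is the Belolipetsky--Lubotzky framework of Propositions~\ref{prop-aut1} and~\ref{prop-aut2}.
\begin{enumerate}
\item The free quotient $\Delta_1/M\cong F_r$ is built geometrically: an embedded totally geodesic hypersurface gives an amalgam or HNN splitting, combined with strong approximation and a congruence quotient $\POm_{n+1}(\F_l)$. One then uses that $M$, although of infinite index, lies in only finitely many maximal lattices $\Gamma_1,\dots,\Gamma_k$.
\item In each $\Gamma_i$ one isolates $D_i=\Delta_iC_i$, the elements of $N_{\Gamma_i}(M)$ acting on $\Delta_i/M$ by inner automorphisms. These are shown to be torsion-free by the congruence argument.
\item The remaining elements are eliminated by subgroup counting, with Lubotzky's theorem entering through an auxiliary normal subgroup $K$. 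The counting runs over \emph{non-normal} subgroups $A\supset M$ whose index tends to infinity, and shows that most satisfy $N_{\Delta_i}(A)/A\cong G$ and $N_{\Gamma_i}(A)=N_{D_i}(A)$ for all $i$.
\item Finally $A$ is replaced by $B=AC_j$ to absorb the centralizing part, giving $N_H(B)/B\cong G$.
\end{enumerate}
By restricting to normal subgroups with quotient exactly $G$, you give up the growing parameter this counting needs.
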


As in \cite[Section~2]{BelLubotzkyIsom}, the proof of the theorem splits into an algebraic result and a geometric realization. 

\medskip\noindent\textbf{The algebraic result.}
Let $\Gamma_1$,\ldots, $\Gamma_k$, $k \ge 1$,  be finitely generated groups and $\Delta_i \trianglelefteq \Gamma_i$ finite index normal subgroups such that
$$\Delta_1 \ge \Delta_2 \ge \ldots \ge \Delta_k > M,$$
where $M$ is an infinite index normal subgroup of $\Delta_1$ with $\Delta_1/M \cong F_r$, a free group on $r\ge 2$ generators. 
We denote by $N_i = N_{\Gamma_i}(M)$ the normalizers of $M$ in $\Gamma_i$. We have 
$\Delta_i\le N_i \le \Gamma_i$ for $i = 1\ldots,k$. 

The group $N_i$ acts by conjugation on a non-abelian free group $F = \Delta_i/M$ and let $C_i = C_{N_i}(\Delta_i/M)$  denote the kernel of this action.
Then $D_i = \Delta_i C_i$ is the subgroup of all elements of $N_i$ which induce inner automorphisms of $F$. Both $C_i$ and $D_i$ are clearly normal in $N_i$.
Moreover, $M$ is normal in $D_i$, $\Delta_i$ and in $C_i$. As $F = \Delta_i/M$ has a trivial center and hence intersects $C_i/M$ trivially, $\Delta_i\cap C_i = M$. So, taking mod $M$ we get
$$D_i/M = \Delta_i/M \times C_i/M,$$
where $C_i/M$ is a finite group because $\Delta_i/M$ is of finite index in $D_i/M$:
$$
\xymatrix@C=2pt@R=2.8ex{
*+[c]{\Gamma_i} \ar@{-}[ddd]_{fin.} \ar@{-}[dr] & \\
                                &*[r]{\:N_i = N_{\Gamma_i}(M)} \ar@{-}[d] \ar@{-}[ddl]  \\
                                &*+[r]{D_i = \Delta_i C_i} \ar@{-}[dl] \ar@{-}[dddd]_{F}\\
*+[r]{\Delta_i} \ar@{-}[dddd]_{F}    & \\
                                & \\
                                & \\
                                &*+[r]{C_i = C_{N_i}(\Delta_i/M)} \ar@{-}[ld]^{fin.} \\
{M} \\ }
$$
\begin{proposition}\label{prop-aut1}
Let $\Gamma_i \trianglerighteq \Delta_i$, $D_i$, for $i=1, \ldots, k$, and $M$ be as above. For every finite group $G$ there exist infinitely many finite index subgroups $A$ of $\Delta_k$ which contain $M$ and have  $N_{\Delta_i}(A)/A \cong G$  and $N_{\Gamma_i}(A) = N_{D_i}(A)$ for all $i=1, \ldots, k$.
\end{proposition}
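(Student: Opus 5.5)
Everything passes to the free group $F=\Delta_1/M\cong F_r$. Finite index subgroups $A$ of $\Delta_k$ containing $M$ correspond to finite index subgroups $\bar A=A/M$ of $\bar\Delta_k=\Delta_k/M\le F$, which is itself free of finite rank. We will count subgroups, following the method of \cite{BelLubotzkyIsom} which builds on \cite{Lu1}.

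\textbf{Step 1: realize $G$.} Fix a surjection $\pi:\bar\Delta_k\to G\times P$, where $P$ is an auxiliary finite group; we may take $\bar\Delta_k$ of large rank. Take $\bar A$ to be the preimage of $P$ (or a suitable subgroup of it), so that $N_{\bar\Delta_k}(\bar A)/\bar A\cong G$. Free groups have huge subgroup growth: the number of index-$n$ subgroups grows like $(n!)^{\mathrm{rk}-1}$. So we count those $\bar A$ of index $n$ in $\bar\Delta_k$ whose normalizer in $\bar\Delta_k$ has quotient exactly $G$. As in \cite{BelLubotzkyIsom}, these should form a positive proportion (or at least a superexponential number) of a suitable family. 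For instance, take subgroups $\bar A=\bar B\cap \pi^{-1}(P)$ built from transitive actions with prescribed block structure.

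\textbf{Step 2: the other $\Delta_i$.} For $i<k$ we need $N_{\Delta_i}(A)/A\cong G$ as well, so $\Delta_i$ must not normalize $A$ beyond $N_{\Delta_k}(A)$. Elements of $\Delta_i\setminus\Delta_k$ act on $F$ by conjugation. The subgroups whose normalizer in $\Delta_1/M=F$ is larger than expected are few: their number is bounded by choosing a finite-index overgroup in $F$ and a normal subgroup of it, which is negligible compared with the total count. This is the Lubotzky-type estimate showing that a random finite index subgroup of a free group is self-normalizing, or here has prescribed normalizer.

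\textbf{Step 3: the elements of $N_i\setminus D_i$.} These induce non-inner automorphisms of $F$ and are finitely many modulo $D_i$. For each of the finitely many outer classes $\phi$ (over all $i\le k$), count the subgroups $\bar A$ with $\phi(\bar A)$ conjugate to $\bar A$. I expect this to be the main obstacle. The plan is to use the estimate from \cite{BelLubotzkyIsom} and \cite{Lu1}: a non-inner automorphism of a free group fixes (up to conjugacy) only $o(1)$ of the index-$n$ subgroups. Because there are only finitely many $\phi$ coming from the $k$ groups $\Gamma_i$, a union bound shows that most of the subgroups from Steps 1 and 2 are invariant under none of them. For those, $N_{\Gamma_i}(A)\le D_i$, hence $N_{\Gamma_i}(A)=N_{D_i}(A)$.

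The delicate point is making Steps 1 and 3 compatible. The family built to realize $G$ must itself be large enough that the $o(1)$ estimates apply within it. To handle this I would follow \cite{BelLubotzkyIsom}: fix the structure giving $G$ on a bounded-index piece, randomize in a deep free factor, and apply the automorphism-counting bound inside that factor. Letting $n\to\infty$ then yields infinitely many such $A$.
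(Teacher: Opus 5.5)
Your outline has the right overall shape: pass to $F=\Delta_1/M$, count as in \cite{BelLubotzkyIsom}, and take a union bound over the finitely many outer classes coming from $\Gamma_1,\dots,\Gamma_k$. However, the step that makes the count work is missing, and you flag it yourself as ``the delicate point''.

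Your negligibility estimates in Steps~2 and~3 are $o(1)$ statements about uniformly random index-$n$ subgroups. For $G\neq 1$, the subgroups with $N_{\Delta_k}(A)/A\cong G$ form a vanishing proportion of all subgroups of a given index, since almost all index-$n$ subgroups of a free group are maximal and self-normalizing. So an $o(1)$ exceptional set can contain your whole family, and ``negligible compared with the total count'' proves nothing. Every bad event has to be estimated relative to the family that realizes $G$, and ``randomize in a deep free factor'' does not say how.

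Step~1 is also not yet a construction. The preimage of $1\times P$ is normal of index $|G|$ in $\bar\Delta_k$, and there are only finitely many such subgroups. You need something like preimages of $1\times P_0$ with $P_0$ self-normalizing in $P$ and $|P|\to\infty$.

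Finally, \cite{Lu1} does not supply the proportion estimate you cite in Step~3; it concerns normal automorphisms of free groups. In the paper it is used, as in \cite{BelLubotzkyIsom}, to define finite-index subgroups $K_j\trianglelefteq\Delta_j$ containing $M$. The new idea is to fix one finite-index $K\trianglelefteq\Delta_1$ inside $\bigcap_j K_j$ and run the \cite{BelLubotzkyIsom} count for subgroups $M\le A<K$. First one gets infinitely many such $A$ with $N_{\Delta_k}(A)/A\cong G$ and $N_{\Gamma_k}(A)=N_{D_k}(A)$. Then one reruns the count on this collection viewed inside $K_{k-1},K_{k-2},\dots,K_1$, showing each time that most of them survive. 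Counting inside this common $K$ is what reconciles the family realizing $G$ with the elimination of the extra automorphisms.

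There is also a case you never treat: $N_{\Gamma_i}(A)$ may a priori contain elements $\gamma\in\Gamma_i$ that do not normalize $M$. These do not act on $F$ at all, so ``everything passes to the free group'' is false for them, and your Step~3 only handles $N_i\smallsetminus D_i$. They can be excluded as follows. If $\gamma A\gamma^{-1}=A$, then $A\supseteq\gamma M\gamma^{-1}$. Since $\gamma$ normalizes $\Delta_i$, the quotient $\Delta_i/\gamma M\gamma^{-1}\cong\Delta_i/M$ is a finitely generated free group, hence Hopfian. Therefore $\gamma M\gamma^{-1}\subseteq M$ forces $\gamma\in N_i$. So for $\gamma\notin N_i$, $A/M$ would have to contain one of finitely many fixed nontrivial normal subgroups of $\Delta_i/M$. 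This too must be shown to be a negligible event within your $G$-family, not merely among all subgroups of the given index.
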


\begin{proof}
The argument is very similar to the main algebraic result in \cite[Section~4]{BelLubotzkyIsom}. The main difference comes in the definition of a certain subgroup $K$ of $\Delta$ containing $M$. To this end we first define $K_j \trianglelefteq \Delta_j$ using \cite{Lu1} as it was done for $K \trianglelefteq \Delta$ in~\cite{BelLubotzkyIsom}. Then  we let $K$ be a finite index normal subgroup of $\Delta_1$ contained in $\bigcap_{j = 1}^{k}K_j$. Notice that since all $K_j$ contain $M$, the group $K \supset M$, and also that $K$ is a finite index normal subgroup of each $\Delta_j$.

From here the subgroup growth techniques (see \cite{LubotzkySegal}) allow us to prove the proposition the same way as in \cite{BelLubotzkyIsom}. First we show that there infinitely many subgroups $A < K$ that contain $M$ and have 
$N_{\Delta_k}(A)/A \cong G$  and $N_{\Gamma_k}(A) = N_{D_k}(A)$. Then applying the same counting argument to this collection of subgroups considered as subgroups of $K_{k-1}$ we obtain that majority of them have $N_{\Delta_{k-1}}(A)/A \cong G$  and $N_{\Gamma_{k-1}}(A) = N_{D_{k-1}}(A)$. Repeating this for the remaining $i = k-2, \ldots, 1$ finishes the proof.  This argument actually shows that most of the subgroups $A < K$ of a sufficiently large index that contain $M$ satisfy the conditions of the proposition. 
\end{proof}

\medskip\noindent\textbf{The geometric realization.} Here we need to show that in each dimension $n \ge 2$ there exist arithmetic lattices in $H = \Isom(\calH^n)$ satisfying the assumptions of Proposition~\ref{prop-aut1} and, moreover, that $\Gamma_1, \ldots, \Gamma_k$ is a complete list of maximal arithmetic lattices that contain $M$. 

\begin{proposition}\label{prop-aut2}
For every $n\ge 2$ there exist maximal cocompact
arithmetic lattices $\Gamma_1$, \ldots, $\Gamma_k$ in $H$ with subgroups $M$, $\Delta_i$ and $D_i$  satisfying the following properties:
\begin{itemize}
\item[(i)] $\Delta_i\trianglelefteq\Gamma_i$ and $[\Gamma_i : \Delta_i] < \infty$ for all $i = 1, \ldots, k$.
\item[(ii)] $\Delta_1 \ge \ldots \ge \Delta_k > M$, $\Delta_1 \triangleright M$, and $\Delta_1/M$ is a non-abelian free group.
\item[(iii)] For all $i$, $[\Gamma_i : D_i] < \infty$, $D_i\le N_{\Gamma_i}(M)$ and
\newline $D_i = \{\delta\in N_{\Gamma_i}(M) \mid \delta {\text\ induces\ an\
inner\ automorphism\ on\ } \Delta_i/M \}$.
\item[(iv)] The groups $D_i$ are torsion-free.
\item[(v)] If $\Gamma$ is a maximal lattice in $H$ containing $M$, then $\Gamma = \Gamma_i$ for some $i$. 
\end{itemize}    
\end{proposition}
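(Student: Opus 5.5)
We follow the geometric realization of \cite[Section~3]{BelLubotzkyIsom}, but start from an arithmetic lattice rather than a non-arithmetic one. Fix $n\ge2$ and a cocompact arithmetic lattice $\Lambda_0<H$ of the simplest type, defined by an anisotropic quadratic form over a totally real field $k\ne\bbQ$. Such lattices exist in every dimension. We need a finite-index subgroup with a surjection onto a non-abelian free group. For $n\ge3$ the groups of the simplest type are virtually special by \cite{BHW}, and in particular they are large. Alternatively, as in \cite{BelLubotzkyIsom}, one can use two disjoint embedded totally geodesic hypersurfaces in a suitable finite cover. For $n=2$ every surface group is large. So there is a torsion-free normal subgroup $\Delta\trianglelefteq\Lambda_0$ of finite index and an epimorphism $\Delta\twoheadrightarrow F_r$ with $r\ge2$. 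Let $M$ be its kernel.

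Next, by Proposition~\ref{finitelymany} applied to $\Delta$, there are only finitely many maximal lattices containing $\Delta$; call them $\Gamma_1,\dots,\Gamma_k$. They are arithmetic and cocompact because they are commensurable with $\Lambda_0$. For each $i$, let $\Delta_i$ be the normal core of $\Delta$ in $\Gamma_i$, intersected with $\Delta_{i-1}$ and with a torsion-free normal subgroup of $\Gamma_i$ (Selberg's lemma). We then replace each $\Delta_i$ by its core in $\Gamma_i$ once more. This can break the chain condition, so we instead define $\Delta_1'=\Delta$ and $\Delta_i=\bigcap_{j\le i}\operatorname{core}_{\Gamma_j}(\Delta)$. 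Each of these is normal in $\Gamma_i$ only after also intersecting with the cores relative to the other $\Gamma_j$. The clean fix is to set every $\Delta_i$ equal to a single $\Delta^*=\bigcap_{j}\operatorname{core}_{\Gamma_j}(\Delta\cap\Theta_0)$, with $\Theta_0$ torsion-free. Then $\Delta^*\trianglelefteq\Gamma_i$ for all $i$, and the chain $\Delta_1\ge\dots\ge\Delta_k$ holds trivially.

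The catch is that $M$ must lie in $\Delta^*$ with $\Delta^*/M$ free. To arrange this, we first pass to $\Delta^*$ and only then choose the epimorphism $\Delta^*\twoheadrightarrow F_r$ using largeness of $\Delta^*$. We then check that the list of maximal lattices containing the new $M$ is still $\Gamma_1,\dots,\Gamma_k$. This gives (i), (ii) and (iv). For (iv), note that $D_i=\Delta_iC_i$, and $C_i/M$ is finite. We ensure torsion-freeness as in \cite{BelLubotzkyIsom}: since $M$ is torsion-free, a torsion element of $D_i$ would give a finite-order element of $\Gamma_i$ acting trivially on $\Delta_i/M$. We rule this out by choosing the map to $F_r$ so that no nontrivial finite subgroup of $\Gamma_i$ normalizing $M$ centralizes $\Delta_i/M$, following the same argument as in that paper. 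Property (iii) is then the general algebraic observation made before Proposition~\ref{prop-aut1}. Its finite-index claim holds because $D_i\supseteq\Delta_i$.

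The main obstacle is (v). Any maximal lattice $\Gamma\supseteq M$ must be shown to contain $\Delta^*$, or at least a fixed finite-index subgroup, so that Proposition~\ref{finitelymany} applies; the difficulty is that $M$ has infinite index. The plan is as follows. Since $M$ is a nontrivial normal subgroup of the lattice $\Delta^*$, it is Zariski dense, so $\mathrm{Comm}_H$-considerations and the Borel density theorem apply. First, any lattice $\Gamma$ containing $M$ is commensurable with $\Lambda_0$: the trace field and quaternion or quadratic-form data of $M$ coincide with those of $\Lambda_0$, and a Zariski-dense subgroup determines the arithmetic commensurability class. Second, $\Gamma\cap\Delta^*$ has finite index in $\Gamma$ and contains $M$. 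Third, as in \cite{BelLubotzkyIsom}, the normalizer of $M$ in $H$ is discrete, since $M$ is Zariski dense. Its limit set is all of $\partial\calH^n$, because $M$ is normal in a lattice. Hence $N_H(M)$ is a lattice containing $\Delta^*$. We then must show that $\Gamma$ and $\langle\Gamma\cap N_H(M),\Delta^*\rangle$ lie in a common maximal lattice containing $\Delta^*$. The natural tool is the maximal-lattice structure of the commensurator. If this fails, we would choose $M$ more carefully, for example characteristic in $\Delta^*$ with $\Delta^*/M$ free, so that every lattice normalizing a finite-index subgroup of $\Gamma\cap\Delta^*$ is forced into the finite list.
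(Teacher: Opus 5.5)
Your proposal has a structural error, and properties (iv) and (v) remain unproved.

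The ``clean fix'' is impossible as soon as $k\ge2$. Suppose a finite-index subgroup $\Delta^*$ were normal in two distinct maximal lattices $\Gamma_1,\Gamma_2$. Then $N_H(\Delta^*)$, the normalizer of a lattice, would be discrete, hence a lattice containing both, and maximality would give $\Gamma_1=N_H(\Delta^*)=\Gamma_2$. Independently, the intersection over $j$ of the $\Gamma_j$-cores is in general not normal in a given $\Gamma_i$. This is exactly why the statement asks only for a nested chain $\Delta_1\ge\dots\ge\Delta_k$ with $\Delta_i\trianglelefteq\Gamma_i$. The paper builds this chain inductively, taking $\Delta_i$ to be the $\Gamma_i$-core of $\Delta_{i-1}$, and Proposition~\ref{prop-aut1} accordingly runs its counting one $\Gamma_i$ at a time.

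Your order of choices is also backwards for (v). You fix $\Gamma_1,\dots,\Gamma_k$ as the maximal lattices containing $\Delta$, then shrink $M$ and ``check'' that the list is unchanged. But (v) concerns the infinite-index subgroup $M$, and a maximal lattice containing $M$ need not contain $\Delta$ or $\Delta^*$. Proposition~\ref{finitelymany}, which rests on Borel's volume bound for lattices containing a given lattice, says nothing here. Your last paragraph offers only hopes (a common maximal lattice for $\Gamma$ and $\Delta^*$, or a characteristic $M$), not an argument. In the paper, $M$ is constructed first and $\Gamma_1,\dots,\Gamma_k$ are then defined as the maximal lattices containing $M$. Their finiteness comes from the classification of maximal arithmetic subgroups \cite{Bel07}, which also puts all $\Gamma_i$ inside one $\mathrm{G}(\calO)$, with $\calO$ a fixed ring of $S$-integers.

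Property (iv) is also not established. A torsion element of $D_i$ has image in $D_i/M=\Delta_i/M\times C_i/M$ with trivial free component, so it lies in $C_i$; you therefore need $C_i$ to be torsion-free. For an $M$ coming from an arbitrary epimorphism $\Delta^*\twoheadrightarrow F_r$ supplied by largeness, nothing rules out a finite-order $c\in\Gamma_i$ with $[c,\Delta_i]\subseteq M$. Saying you will ``choose the map so that this does not happen'' gives no mechanism.

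The argument of \cite{BelLubotzkyIsom} that you invoke depends on the specific construction you set aside. There $M$ is the kernel of $\Lambda_1\to R=Q*_TQ$ (or $Q*_T$), with $Q=\POm_{n+1}(\F_l)$, obtained from an embedded totally geodesic hypersurface together with strong approximation. For $c\in C_i$, the induced automorphism of $R=F\rtimes Q$ is trivial on the finite-index subgroup $\Delta_i/M$ of $F$. Hence it is trivial on $F$, and then on all of $R$. So $\bar c$ centralizes $Q$ in $\PSO_{n+1}(\F_l)$, forcing $\bar c=1$ and $c\in\Gamma_i(l)$, which is torsion-free. This reduction mod $l$ is defined for every $\Gamma_i$ only because all of them lie in the same $\mathrm{G}(\calO)$.

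Replacing the hypersurface and congruence construction of $M$ by abstract largeness discards precisely the structure that makes (iv), and the finiteness in (v), provable.
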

\begin{proof}
The proof is based on \cite[Section~5]{BelLubotzkyIsom}. 

Let $\Gamma_0$ be an arithmetic lattice in $H$ defined by a quadratic form (also called a type I arithmetic lattice) and let $\Gamma_1$ be a maximal arithmetic lattice containing $\Gamma_0$. Then the quotient orbifold $\Gamma_0\backslash\calH^n$ has an immersed totally geodesic hypersurface, we can assume that the hypersurface is embedded and, moreover, there is a finite index normal subgroup $\Lambda_1 < \Gamma_1$ which is contained in $\Gamma_0$ and such that $\Lambda_1\backslash\calH^n$ has an embedded totally geodesic hypersurface. 

We now come to a setting when $\Lambda_1 \triangleleft \Gamma_1$ decomposes in a direct product with amalgam or an HNN-extension. 
The argument with the strong approximation and the congruence quotients in \cite[Section~5]{BelLubotzkyIsom} applies verbatim to provide the subgroups $M$, $\Delta_1$ and $D_1$ with the desired properties. 


Now the crucial remark is that $M$ is contained in only finitely maximal lattices $\Gamma_1,\ldots, \Gamma_k$ in $H$, as it follows from the classification of the maximal arithmetic subgroups (see \cite{Bel07}). Assume that we found subgroups $\Delta_1$, \ldots, $\Delta_{i-1}$ and $D_1$, \ldots, $D_{i-1}$ with the required properties. We define $\Delta_i$ to be a finite index normal subgroup of $\Gamma_i$ contained in $\Delta_{i-1}$, i.e. we have $\Delta_{i-1}\cap\Gamma_i < \Gamma_i$, a finite index subgroup, so we take the intersection of its $\Gamma_i$-conjugates. By the construction, $\Delta_i \triangleright M$. The subgroup $D_i \leq \Gamma_i$ defined as in (iii) contains $\Delta_i$ hence it has a finite index in $\Gamma_i$. It remains to show that $D_i$ is torsion-free. 

Using some notations of \cite{BelLubotzkyIsom}, we have:
$$ 
\xymatrix@R=2.8ex{
& & R \ar[dr]_{\bar\pi}\\
\Delta_1 \ar[r] & \Lambda_1 \ar[ur]_{\widetilde\pi} \ar[rr]_{\pi} & & Q \\
\Delta_i \ar@{-}[u] \\
M \ar@{-}[u] \ar[uur] \\ }
$$
where $R = Q*_TQ\ (\text{or}\ =Q*_T)$, $Q = \POm_{n+1}(\F_l) \leq \PSO_{n+1}(\F_l)$ and $T \leq \PSO_n(\F_l)$, $l$ --- a sufficiently large prime. 

The classification of the maximal arithmetic lattices implies that $\Gamma_1$, \ldots, $\Gamma_k \subset \mathrm{G}(\calO)$, where $\mathrm{G}$ is an associated algebraic group over a totally real field $\mathfrak{k}$ and $\calO$ is the ring of $S$-integers of $\mathfrak{k}$. Moreover, the finite set $S$ of the inverted primes depends only on $\mathrm{G}$ and $\mathfrak{k}$ (cf. \cite{Bel07}). 

Thus we have $\Gamma_i \subset \mathrm{G}(\calO)$ and for every $\gamma \in \Gamma_i$ we have a well defined image $\bar{\gamma} = \pi(\gamma) \in \PSO_{n+1}(\F_l)$. We are interested in the group $D_i = \Delta_iC_i$ with $C_i = \{ \delta\in N_{\Gamma_i}(M) \mid \delta|_{\Delta_i/M} = \mathrm{id} \}$. We already know that $\Delta_i \leq \Gamma_i(l)$, the congruence subgroup of $\Gamma_i$. We will show that $C_i \leq \Gamma_i(l)$ which will prove that $D_i \leq \Gamma_i(l)$ and hence $D_i$ is torsion-free by Selberg's lemma. 

Let $c \in C_i$. Then $\bar{c}$ acts on $Q$, which induces its action $\alpha$ on $R$ compatible with the trivial action of $c$ on $\Delta_i/M$. Now $R = F \rtimes Q$ and the free group $F_i = \Delta_i/M$ is a finite index subgroup of $F$. The action $\alpha|_{F_i}$ is trivial, hence $\alpha|_F = \mathrm{id}$ and by \cite[Proposition~3.4(c)]{BelLubotzkyIsom} the automorphism $\alpha$ of $R$ is trivial. It follows that $\bar{c}$ centralizes $Q$, hence $\bar{c}$ is trivial by \cite[Section~3.3]{BelLubotzkyIsom} and hence $c \in \Gamma_i(l)$.

This finishes the proof of the theorem by induction on $i$.
\end{proof}

\begin{remark}    
We note that arithmetic manifolds of type II and type III do not have codimension one totally geodesic subspaces and thus for the lattices of these types the method of this section will not apply (see \cite{BBKS} for details on geodesic subspaces of arithmetic hyperbolic orbifolds). It is an open problem if every finite group can be realized as a full group of isometries of such  manifolds which exist in every odd dimension $n \ge 3$. This problem is interesting because its solution may shed new light on the isometry groups of complex hyperbolic manifolds (cf. \cite{Lub-Stover}). 
\end{remark}

\medskip\noindent\textbf{Proof of Theorem~\ref{mainthm2}.}
Let $n\ge2$, $G$ is a finite group, $\Gamma_1, \ldots, \Gamma_k$, $\Delta_i$ and $D_i$ are lattices as in Proposition~\ref{prop-aut2}, and $M$ is a corresponding infinite index subgroup of $\Delta_1$. By Proposition~\ref{prop-aut1}, there exist infinitely many finite index subgroups $A$ of $\Delta_k$ with $N_{\Delta_i}(A)/A \cong G$  and $N_{\Gamma_i}(A) = N_{D_i}(A)$ for all $i$. 

Let $A$ be one of these groups. Notice that since $A \supset M$, by Proposition~\ref{prop-aut2}(v) the normalizer $N_H(A)$, which is known to be a lattice, is contained in $\Gamma_i$ for some $i\in I \subset \{1,\ldots, k\}$. Let $j$ be the smallest index for which it holds. We replace $A$ by $B = AC_j$. Since $A$ and $C_j$ are both in $D_j$ and $C_j\lhd D_j$, $B$ is indeed a subgroup and it is contained in $D_j$. We will show that $N_H(B)/B \cong G$ which will finish the proof of the theorem.

On one hand, if $\gamma\in N_H(A) = N_{\Gamma_j}(A)$, then it also normalizes $C_j$,
since $N_{\Gamma_j}(A)\le D_j$ and $C_j$ is normal in $D_j$. This shows that $N_H(A) \subset N_{\Gamma_j}(B)$ and hence $N_H(B)$ is contained in (some of the) maximal lattices $\Gamma_i$, $i\in I$.

Let $\gamma\in N_H(B) \subset \Gamma_i$. Then $\gamma$ also normalizes $\Delta_i$ (since $\Delta_i\lhd\Gamma_i$). Recalling that $\Delta_i \subset \Delta_j$ and $A \subset \Delta_i$, we conclude that 
$$A^\gamma = (B\cap\Delta_j)^\gamma = (B\cap\Delta_i)^\gamma = B^\gamma\cap\Delta_i^\gamma = B\cap\Delta_i = A,$$
and hence $\gamma\in N_H(A)$.

Thus we have 
$$N_H(B)/B = N_H(A)/AC \cong N_{\Delta_j}(A)/A \cong G.$$
\qed

\begin{remark}
As in \cite{BelLubotzkyIsom}, the proofs of the results in this section extend directly to non-cocompact arithmetic hyperbolic lattices. 
\end{remark}

\bibliography{main.bib}{}
\bibliographystyle{siam}

\end{document}